\theoremstyle{plain}
\newtheorem{thm}{Theorem}[section]
\Crefname{thm}{Theorem}{Theorems}
\newtheorem{cor}[thm]{Corollary}
\newtheorem{lem}[thm]{Lemma}
\Crefname{lem}{Lemma}{Lemmas}
\newtheorem{prop}[thm]{Proposition}
\Crefname{prop}{Proposition}{Propositions}
\theoremstyle{definition}
\newtheorem{defi}[thm]{Definition}
\newtheorem{ex}[thm]{Example}
\DeclareMathOperator{\im}{im}
\DeclareMathOperator{\piv}{pivot}
\DeclareMathOperator{\pivsimp}{pivot}
\DeclareMathOperator{\pivs}{pivots}
\DeclareMathOperator{\cols}{cols}
\DeclareMathOperator{\id}{id}
\DeclareMathOperator{\Rips}{Rips}
\DeclareMathOperator{\supp}{supp}
\title{Efficient Computation of Image Persistence}
\author{Ulrich Bauer \and Maximilian Schmahl}
\begin{document}

\maketitle

\begin{abstract}
We present an algorithm for computing the barcode of the image of a morphism in persistent homology induced by an inclusion of filtered finite-dimensional chain complexes. 
The algorithm makes use of the clearing optimization and can be applied to inclusion-induced maps in persistent absolute homology and persistent relative cohomology for filtrations of pairs of simplicial complexes. 
The clearing optimization works particularly well in the context of relative cohomology, and using previous duality results we can translate the barcodes of images in relative cohomology to those in absolute homology.
This forms the basis for our implementation of image persistence computations for inclusions of filtrations of Vietoris--Rips complexes in the framework of the software Ripser.
\end{abstract}

\section{Introduction}
\emph{Persistent homology}, the homology of a filtration of simplicial complexes, is a cornerstone in the foundations of topological data analysis. The most common setting studied in the literature is as follows: Given a filtration of simplicial complexes 
\[
K_0\subseteq K_1\subseteq\dots\subseteq K_{N}=
K,
\]
by applying homology with coefficients in a field to each space and to each inclusion map
one obtains a diagram of vector spaces 
\[
H_{*}(K_{\bullet})\colon %
H_{*}(K_0)\to\dots\to H_{*}(K). %
\]
Such a diagram is called a \emph{persistence module}, and it decomposes into a direct sum of indecomposable diagrams, each supported on an interval.
The collection of these intervals, called the \emph{persistence barcode}, has proven to be a powerful invariant of the filtration. 

A natural question to ask is how the barcode changes when the filtration changes.
This leads to the seminal \emph{stability theorem} of Cohen-Steiner et al.\@ \cite{MR2279866}, which asserts that passing from filtrations to barcodes is a $1$-Lipschitz map. One way to approach the stability theorem is via \emph{induced matchings}, which were introduced in \cite{MR3333456}. Given a morphism of filtrations $f_{\bullet}\colon L_{\bullet}\to K_{\bullet}$, the homology functor induces a morphism $H_{*}(f_{\bullet}) \colon H_{*}(L_{\bullet}) \to H_{*}(K_{\bullet})$ of persistence modules.
From this morphism, the induced matching construction yields a partial bijection between the barcodes of $H_{*}(L_{\bullet})$ and $H_{*}(K_{\bullet})$, which can be used to bound the distance between these two barcodes from above.
The induced matching 
is defined in terms of the \emph{image persistence} barcode, i.e., the barcode of $\im H_{*}(f_{\bullet})$,
motivating the problem of computing this barcode.
A first algorithm for this problem has been given by Cohen-Steiner et al.\@ \cite{MR2807543} for the special case where $f_{\bullet}$ is of the form $L_{\bullet} = K_{\bullet} \cap L \hookrightarrow K_{\bullet}$ for some fixed subcomplex $L \subseteq K$.
The authors also propose a method for getting rid of the intersection assumption using a mapping cylinder construction.
This construction, however, might not be computationally feasible and we will show that this additional step is in fact not necessary.

Cohen-Steiner et al.~\cite{MR2807543} propose applications of image persistence for recovering the persistent homology of a noisy function on a noisy domain, see also the related work by Chazal et al.~\cite{MR2846177}.
Recently, Reani and Bobrowski~\cite{reani2021cycle} proposed a method that includes the computation of induced matchings in order to pair up common topological features in different data sets, with applications to statistical bootstrapping. 
Furthermore, the computation of image barcodes is used in a distributed algorithm for persistent homology based on the Mayer--Vietoris spectral sequence by \'Alvaro Torras Casas~\cite{casas2020distributing}.
Image persistence of endomorphisms such as Steenrod squares on the persistent (co)homology of a single filtration has also been proposed by Lupo et al.~\cite{lupo2021persistence} as a tool to get more comprehensive invariants than the standard persistent (co)homology barcodes.

Despite the usefulness of image persistence, there are a few aspects that have prevented these techniques from being widely used in applications so far.
Specifically, to the best of our knowledge, there is no publicly available implementation at this moment.
Furthermore, computation using the known algorithms is slow in comparison to modern algorithms for a single filtration.
Indeed, computing usual persistent homology for larger data sets arising in real-world applications only became feasible in recent years due to optimizations that exploit various structural properties and algebraic identities of the problem \cite{Chen.2011,MR2854319,MR4298669}.
Our goal is to adapt these speed-ups to the computation of images and induced matchings.

The basic algorithm for computing persistent homology is based on performing \emph{matrix reduction}, a variant of column-wise Gaussian elimination, on a \emph{boundary matrix} associated to the given filtration of simplicial complexes. 
This algorithm can be made faster by using the \emph{clearing} optimization, introduced %
by Chen and Kerber in \cite{Chen.2011},
and also used implicitly in the cohomology algorithm by de Silva et al.\@ \cite{MR2854319}. 
In short, this optimization makes use of the homological grading of the boundary matrix and works by deleting certain unnecessary columns during the reduction process.
The basic algorithm for image persistence also requires the reduction of a permuted boundary matrix, to which clearing cannot be straightforwardly applied. 
We will remedy this by showing that one can delete the columns in the permuted boundary matrix that were already reduced to 0 in the boundary matrix corresponding to the codomain filtration.

The clearing optimization works particularly well in conjunction with cohomology based algorithms. These were first studied by de Silva et al.\@ in \cite{MR2854319} for the single filtration case and justified by certain duality results that provide a translation between barcodes for persistent homology and for \emph{persistent cohomology}, 
as well as the barcodes for \emph{persistent relative homology} 
$
H_{*}(K,K_{\bullet})\colon H_{*}(K,K_0)\to\dots\to H_{*}(K,K)
$
and similarly for \emph{persistent relative cohomology}.
We recently extended these duality results \cite{part1} in order to also provide translations for images %
of $H_{*}(f_{\bullet})$ and $H^{*}(f_{\bullet})$, as well as their relative counterparts $H_{*}(f, f_{\bullet})$ and $H^{*}(f, f_{\bullet})$. This allows us to perform cohomology based computations and still obtain the desired barcodes in homology. 

To apply clearing in the relative cohomology setting for image persistence, we will reformulate the algorithm for image persistence by Cohen-Steiner et al.~\cite{MR2807543} in the purely algebraic setting of filtered chain complexes of vector spaces. 
More precisely, we will consider two filtrations of (co)chain complexes $C_{\bullet}$ and $C'_{\bullet}$ and a monomorphism $\varphi_{\bullet}\colon C_{\bullet}\to C'_{\bullet}$.
This setup includes both the absolute homology case $C_{*}(L_{\bullet})\hookrightarrow C_{*}(K_{\bullet})$ and the relative cohomology case $C^{*}(K,K_{\bullet})\hookrightarrow C^{*}(L,L_{\bullet})$ from before. 
The general idea for computing the image %
of $H_{*}(\varphi_{\bullet})$ is to first write it as a subquotient of $C'_{\bullet}$:
\[
\im H_{*}(\varphi_{\bullet})\cong \frac{\varphi_{\bullet}(Z_{*}(C_{\bullet}))}{\varphi_{\bullet}(Z_{*}(C_{\bullet}))\cap B_{*}(C'_{\bullet})},
\]
where the intersection of persistence modules is to be interpreted indexwise, meaning that $(\varphi_{\bullet}(Z_{*}(C_{\bullet}))\cap B_{*}(C'_{\bullet}))_{t} = \varphi_{t}(Z_{*}(C_{t}))\cap B_{*}(C'_{t})$.

Performing matrix reductions that make use of the clearing optimization, we will find \emph{filtration compatible bases} for the filtrations appearing in the equation above such that the inclusion of the filtrations into each other are induced by inclusions of their filtration compatible bases. Filtration compatible bases provide a formal framework for many standard arguments for barcode computations via matrix reduction, and they can be interpreted as special cases of \emph{matching diagrams}, which are equivalent to barcodes \cite{Bauer2020Persistence}. Using the general theory of matching diagrams, the data we compute can easily be shown to determine the barcode of $\im H_{*}(\varphi_{\bullet})$. 

Applying these general considerations in the relative cohomology setting and combining this with the translation between relative cohomology and absolute homology from \cite{part1} yields an algorithm for computing the absolute homology image of $f_{\bullet} \colon L_{\bullet} \to K_{\bullet}$ by reducing two coboundary matrices that can be reduced with clearing as summarized in our main result \cref{thm:main}.
An implementation of this method based on Ripser \cite{MR4298669} is publicly available \cite{ripser_image} and we provide some computational benchmarks.
Our software works under the assumption that $L_{\bullet} = \Rips_{\bullet}(X,d)$ and $K_{\bullet} = \Rips_{\bullet}(X,d')$ are filtrations of Vietoris--Rips complexes corresponding to two metrics $d$ and $d'$ on a finite set $X$ that satisfy $d(x,y) \geq d'(x,y)$ for all $x,y \in X$, which ensures that $L_{t} = \Rips_{t}(X,d)$ is a subcomplex of $K_{t} = \Rips_{t}(X,d')$ for all $t$, with the maps $f_{t} \colon L_{t} \to K_{t}$ being given by inclusion. The implementation also makes uses of a version of the emergent and apparent pairs optimizations.

\subsubsection*{Contributions}
\begin{itemize}
	\item We propose the first direct algorithm for computing the image of a map in persistent homology induced by an inclusion of filtrations of simplicial complexes without imposing any restrictions on the subfiltration (\cref{thm:main}).
	\item Our method makes use of variants of the most important optimizations known from usual persistence computations, including 
	\begin{itemize}
		\item clearing (\cref{cor:im_clearing}), 
		\item cohomology based computations (\cref{prop:abs_rel}), and 
		\item apparent pairs (\cref{sec:apparent}).
	\end{itemize}
	\item We provide an implementation in the framework of Ripser (\cite{ripser_image}), with running time and memory usage that are comparable to those of the simple version of Ripser (\cref{table:experiments}).
	\item This enables the use of image persistence and consequently induced matchings in computational settings (such as supervised learning).
\end{itemize}

\subsubsection*{Outline}
First, we review some basics of the theory of persistence modules and their barcodes in \cref{sec:filtration_compatible_bases}, where we also introduce the language of filtration compatible bases. 
We provide some elementary results for these objects, generalizing well-known results from linear algebra to the filtered setting. 
This theory is then used to formulate basic algorithms for computing image persistence of monomorphisms of (co)chain complexes in \cref{sec:main_proof}.
Subsequently, we explain how to apply clearing for image persistence in \cref{subsec:clearing}.
These results are specialized to the setting of simplicial complexes in \cref{sec:summary}, including a discussion of the connection between their absolute homology and relative cohomology, leading to our main result.
Adapting the emergent and apparent pairs optimizations is discussed in \cref{sec:apparent}.
We finish with some computational benchmarks in \cref{sec:benchmarks}.

\subsubsection*{Notation and Conventions}
Throughout the paper, we fix a totally ordered set $(T,\leq)$ to be $\{0,\dots,N\}$ with the obvious order and a field $\mathbb{F}$ over which all vector spaces are considered.

\section{Linear Algebra for Filtrations}\label{sec:filtration_compatible_bases}
In this section, we develop some machinery based on filtration compatible bases, which forms the foundation for our constructions of image persistence barcodes.
First, we need to recall some basic theory for persistence modules and barcodes. 
We write $\mathbf{Vec}$ for the category of vector spaces over our fixed field $\mathbb{F}$. 
We fixed $T = \{0,\dots,N\}$ as a finite totally ordered index set, and we write $\mathbf{T}$ for $T$ considered as a category.

\begin{defi}\label{defi:persistence_module}
The category of \emph{persistence modules indexed by $\mathbf{T}$} is defined as the category $\mathbf{Vec^{T}}$ consisting of functors $\mathbf{T} \to \mathbf{Vec}$. 
\end{defi}

Since $\mathbf{T}$ is a small category and $\mathbf{Vec}$ is an abelian category, the functor category $\mathbf{Vec^T}$ is again abelian, with kernels, cokernels, images, direct sums, and more generally, all limits and colimits given pointwise. 

The prime example for a persistence module is the persistent homology of a filtration of spaces. Other examples are given by \emph{interval modules}. If $I\subseteq T$ is an interval, the corresponding interval module $C(I)_\bullet$ is
defined by
 \[
C(I)_t=
\begin{cases}
    \mathbb{F} & \text{if } t\in I,\\
    0              & \text{otherwise,}
\end{cases}
\qquad
\text{with structure maps}
\qquad
C(I)_{t,u}=
\begin{cases}
    \id_{\mathbb{F}} & \text{if } t,u\in I,\\
    0              & \text{otherwise.}
\end{cases}
\]

These interval modules are of particular interest because they lead to a structure theory for persistence modules.

\begin{defi}
If there is a family of intervals $(I_{\alpha})_{\alpha \in A}$ such that for a persistence module $M_\bullet$ we have $M_\bullet\cong\bigoplus_{\alpha\in A}C(I_{\alpha})_\bullet$, then $M_\bullet$ is said to have a \emph{barcode} given by $(I_{\alpha})_{\alpha \in A}$. 
\end{defi}

Up to a choice of the index set $A$, barcodes are unique if they exist by a version of the Krull--Remak--Schmidt--Azumaya Theorem \cite[Theorem 2.7]{MR3524869}. By Crawley-Boevey's Theorem \cite{MR3323327}, every persistence module consisting only of finite dimensional vector spaces has a barcode. 

We will be particularly interested in specific kinds of persistence modules where all maps are inclusions.

\begin{defi}
We say that a persistence module $M_\bullet$ is a \emph{filtration} of the vector space $M = M_N$ if $M_{t} \subseteq M_{u}$ for all $t\leq u$ and the structure maps $M_{t,u}$ are given by the subspace inclusions.
For any $m \in M$, we define its \emph{support} in $M_\bullet$ as 
$
\supp_{M_\bullet}(m)=\{t\in T\mid m\in M_{t})\}.
$
A basis $\mathfrak{M}$ of $M$ will be called \emph{filtration compatible} if $\mathfrak{M}_{t} = \mathfrak{M} \cap M_{t}$ is a basis for $M_{t}$ for all $t \in T$.
If $(\mathfrak{M},\leq)$ is an ordered basis for $M$, we say that it is a \emph{filtration compatible ordered basis} if it is filtration compatible and $m\leq m'\in\mathfrak{M}$ implies $\supp m'\subseteq\supp m$.
\end{defi}

If $M_{\bullet}$ and $M'_{\bullet}$ are filtrations of vector spaces, we write $M_{\bullet} \subseteq M'_{\bullet}$ if $M_{t} \subseteq M'_{t}$. We write $M'_{\bullet}/M_{\bullet}$ for the persistence module given by $(M'_{\bullet}/M_{\bullet})_{t} = M'_{t}/M_{t}$. Similarly, if $M''_{\bullet}$ is another filtration with $M''_{\bullet} \subseteq M'_{\bullet}$, we write $M_{\bullet}\cap M''_{\bullet}$ for the persistence module given by $(M_{\bullet} \cap M''_{\bullet})_{t} = M_{t} \cap M''_{t}$.

Observe that if $M_{\bullet}$ is a filtration of vector spaces and $\mathfrak{M}$ is a filtration compatible basis, then $(\supp(m))_{m \in \mathfrak{M}}$ is a barcode of $M_{\bullet}$. 
By interpreting $\mathfrak{M}$ as a so-called matching diagram, this may be seen as a special case of the general equivalence of matching diagrams and barcodes \cite{Bauer2020Persistence}.
This theory also yields the following result that forms the basis for our computational results.

\begin{prop}\label{prop:quotient_barcode}
Let $M_{\bullet} \subseteq M'_{\bullet}$ be filtrations of vector spaces with respective filtration compatible bases $\mathfrak{M}$ and $\mathfrak{M}'$ related by an inclusion $\mathfrak{M} \subseteq \mathfrak{M}'$. Then $M'_{\bullet}/M_{\bullet}$ has the barcode
\[
(\supp_{M'_{\bullet}}(m)\setminus \supp_{M_{\bullet}}(m))_{m \in \mathfrak{M}}\cup(\supp_{M'_{\bullet}}(m))_{m \in \mathfrak{M}'\setminus\mathfrak{M}}.
\]
\end{prop}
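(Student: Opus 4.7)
The quotient $M'_\bullet/M_\bullet$ is in general not itself a filtration — its structure maps fail to be injective precisely when some $m \in \mathfrak{M}$ enters $M_u$ and thereby kills its class $[m]$ — so the ``supports give a barcode'' observation recorded in the paragraph just before the proposition does not apply directly. My plan is to exhibit the data as a matching diagram in the sense of \cite{Bauer2020Persistence} and invoke the equivalence between matching diagrams and barcodes.

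The pointwise half of the proof rests on one compatibility lemma: $\mathfrak{M}' \cap M_t = \mathfrak{M}_t$ for every $t$. The nontrivial inclusion is $\subseteq$: any $m \in \mathfrak{M}' \cap M_t$ lies in $M$ and so expands in the basis $\mathfrak{M}$, whose elements also lie in $\mathfrak{M}'$, so linear independence of $\mathfrak{M}'$ forces $m$ itself to be one of those basis vectors, hence $m \in \mathfrak{M} \cap M_t = \mathfrak{M}_t$. Combined with filtration compatibility of $\mathfrak{M}'$, this shows that $\{[m] : m \in \mathfrak{M}'_t \setminus \mathfrak{M}_t\}$ is a basis of $M'_t/M_t$ at every $t$.

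To globalize, for each $m \in \mathfrak{M}'$ I would set $J_m = \{t : m \in \mathfrak{M}'_t \setminus \mathfrak{M}_t\}$; unpacking, this is $\supp_{M'_\bullet}(m)$ when $m \in \mathfrak{M}' \setminus \mathfrak{M}$ and $\supp_{M'_\bullet}(m) \setminus \supp_{M_\bullet}(m)$ when $m \in \mathfrak{M}$. Both supports are up-sets in $T$ with $\supp_{M_\bullet}(m) \subseteq \supp_{M'_\bullet}(m)$, so every $J_m$ is automatically a (possibly empty) interval, and $(J_m)_{m \in \mathfrak{M}'}$ is exactly the claimed family of bars. The assignment $m \mapsto [m]$ then realizes these as a matching diagram for $M'_\bullet/M_\bullet$: for $t \leq u$ the quotient structure map sends $[m]_t$ to $[m]_u$, which is nonzero exactly when $u \in J_m$, reproducing the structure of $C(J_m)_\bullet$. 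The step to be careful about is precisely this compatibility — confirming that the quotient structure maps vanish exactly on the basis elements that drop out of their $J_m$ — but it is immediate from the definition of the quotient once the pointwise basis lemma is in hand. The matching-diagram/barcode equivalence then delivers the stated barcode.
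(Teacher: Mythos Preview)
Your proposal is correct and follows exactly the approach the paper indicates: the paper gives no detailed proof of this proposition, only the remark that it follows from the equivalence of matching diagrams and barcodes in \cite{Bauer2020Persistence}, and your argument supplies precisely those omitted details (the pointwise basis lemma $\mathfrak{M}'\cap M_t=\mathfrak{M}_t$, the identification of each $J_m$ as an interval, and the verification that $m\mapsto [m]$ realizes $M'_\bullet/M_\bullet$ as the linearization of the matching diagram $t\mapsto \mathfrak{M}'_t\setminus\mathfrak{M}_t$). There is nothing to correct.
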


We now develop some helpful facts about filtration compatible bases.
We start with a lemma relating supports of basis elements with filtration compatibility.

\begin{lem}\label{lem:filtration_basis_exchange}
Let $M_{\bullet}$ be a filtration of the vector space $M$ with filtration compatible basis~$\mathfrak{M}$. Let $\mathfrak{M}'$ be another basis for $M$ such that there exists a bijection $g\colon\mathfrak{M}\to\mathfrak{M}'$ with $\supp_{M_{\bullet}}(m)=\supp_{M_{\bullet}}(g(m))$ for all $m\in\mathfrak{M}$. Then $\mathfrak{M}'$ is a filtration compatible basis for $M_{\bullet}$.
\end{lem}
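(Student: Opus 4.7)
The plan is to prove filtration compatibility of $\mathfrak{M}'$ by showing, for each $t \in T$, that $\mathfrak{M}' \cap M_t$ is a basis of $M_t$. I would reduce this to a dimension count: linear independence is immediate because $\mathfrak{M}' \cap M_t$ is a subset of the basis $\mathfrak{M}'$, and it already lies in $M_t$, so once I know $|\mathfrak{M}' \cap M_t| = \dim M_t$, it must span $M_t$.

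The key observation I would make first is the tautology that for any basis element $m$ (of $\mathfrak{M}$ or $\mathfrak{M}'$), the condition $m \in M_t$ is equivalent to $t \in \supp_{M_\bullet}(m)$. Consequently $\mathfrak{M} \cap M_t = \{m \in \mathfrak{M} : t \in \supp_{M_\bullet}(m)\}$ and analogously for $\mathfrak{M}'$. Combined with the hypothesis that $\supp_{M_\bullet}(m) = \supp_{M_\bullet}(g(m))$ for all $m \in \mathfrak{M}$, this shows that $g$ restricts to a bijection between $\mathfrak{M} \cap M_t$ and $\mathfrak{M}' \cap M_t$.

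Filtration compatibility of $\mathfrak{M}$ then gives $|\mathfrak{M} \cap M_t| = \dim M_t$, hence $|\mathfrak{M}' \cap M_t| = \dim M_t$ as well. Combined with the linear independence coming for free from $\mathfrak{M}'$ being a basis, this forces $\mathfrak{M}' \cap M_t$ to be a basis of $M_t$, as required.

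I do not expect any real obstacle here; the lemma is essentially a bookkeeping statement about the equivalence between supports of basis elements and the filtration subspaces they live in. The only thing one has to be mildly careful about is that elements of $\mathfrak{M}'$ are a priori elements of the ambient space $M = M_N$ rather than of the subspaces $M_t$, so invoking $\supp_{M_\bullet}(g(m))$ requires noting that support is well-defined for any vector in $M$, not only for vectors in $\mathfrak{M}$, and that $g(m)$ therefore lies in exactly the same $M_t$'s as $m$ does.
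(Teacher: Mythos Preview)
Your proposal is correct and follows essentially the same approach as the paper's proof: linear independence of $\mathfrak{M}' \cap M_t$ from being a subset of a basis, then a cardinality match via showing $g$ restricts to a bijection $\mathfrak{M} \cap M_t \to \mathfrak{M}' \cap M_t$ using the support hypothesis. The paper spells out the restriction of $g^{-1}$ separately, but the content is the same.
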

\begin{proof}
We need to show that $\mathfrak{M}'_{t}$ is a basis for $M_{t}$ for all $t$. Since $\mathfrak{M}'$ is a basis, $\mathfrak{M}'_{t} = \mathfrak{M} \cap M_{t}$ is linearly independent. We assume $\mathfrak{M}_{t}$ to be a basis for $M_{t}$, so it suffices to show that $\mathfrak{M}_{t}$ and $\mathfrak{M}'_{t}$ have the same cardinality.

To see this, note that if $m \in \mathfrak{M}_{t} = \mathfrak{M} \cap M_{t}$, we must have $t \in \supp_{M_{\bullet}}(m) = \supp_{M_{\bullet}}(g(m))$,
so that $g(m) \in \mathfrak{M}'_{t} = \mathfrak{M}' \cap M_{t}$ holds. Thus, $g$ restricts to a map $\mathfrak{M}_{t} \to \mathfrak{M}'_{t}$. 
Similarly, the restriction of $g^{-1}$ to $\mathfrak{M}'_{t}$ yields a map $\mathfrak{M}'_{t} \to \mathfrak{M}_{t}$.
As the restrictions of $g$ and $g^{-1}$ are inverse to each other, $\mathfrak{M}_{t}$ and $\mathfrak{M}'_{t}$ indeed have the same cardinality.
\end{proof}

Next, we prove a filtered version of a standard fact about intersections of vector spaces.

\begin{lem}\label{lem:intersection}
Let $M'_{\bullet},M''_{\bullet} \subseteq M_{\bullet}$ be filtrations of vector spaces and let $\mathfrak{M}'$ and $\mathfrak{M}''$ be filtration compatible bases for $M'_{\bullet}$ and $M''_{\bullet}$, respectively, such that $\mathfrak{M}' \cup \mathfrak{M}''$ is linearly independent.
Then
$\mathfrak{M}' \cap \mathfrak{M}''$ is a filtration compatible basis for $M'_{\bullet} \cap M''_{\bullet}$.
Moreover, for all $m \in \mathfrak{M}' \cap \mathfrak{M}''$
\[
\supp_{M'_{\bullet} \cap M''_{\bullet}}(m)=\supp_{M'_{\bullet}}(m)\cap\supp_{M''_{\bullet}}(m) .
\]
\end{lem}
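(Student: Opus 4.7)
The plan is to address the two claims in turn, starting with the support identity, which is essentially a set-theoretic tautology, and then using it to set up the harder basis claim. For the basis claim, the main work is in showing spanning, which will be the one spot where the linear independence hypothesis on $\mathfrak{M}' \cup \mathfrak{M}''$ is genuinely needed.

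First I would dispose of the support equation. For any $m \in \mathfrak{M}' \cap \mathfrak{M}''$, by the pointwise definition $(M'_\bullet \cap M''_\bullet)_t = M'_t \cap M''_t$ and the definition of support,
\[
\supp_{M'_\bullet \cap M''_\bullet}(m) = \{t : m \in M'_t \cap M''_t\} = \{t : m \in M'_t\} \cap \{t : m \in M''_t\} = \supp_{M'_\bullet}(m) \cap \supp_{M''_\bullet}(m).
\]
Combined with filtration compatibility of $\mathfrak{M}'$ and $\mathfrak{M}''$, this gives the useful bookkeeping identity
\[
(\mathfrak{M}' \cap \mathfrak{M}'') \cap (M'_t \cap M''_t) = \mathfrak{M}'_t \cap \mathfrak{M}''_t,
\]
which will let me identify the candidate basis at each filtration index with $\mathfrak{M}'_t \cap \mathfrak{M}''_t$.

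Next I would fix $t \in T$ and show $\mathfrak{M}'_t \cap \mathfrak{M}''_t$ is a basis for $M'_t \cap M''_t$; filtration compatibility of $\mathfrak{M}' \cap \mathfrak{M}''$ for $M'_\bullet \cap M''_\bullet$ then follows immediately. Linear independence is inherited from $\mathfrak{M}'$. For spanning, given $v \in M'_t \cap M''_t$, I expand it in two ways, as $v = \sum_{m \in \mathfrak{M}'_t} a_m m$ using the basis $\mathfrak{M}'_t$ of $M'_t$ and as $v = \sum_{m \in \mathfrak{M}''_t} b_m m$ using the basis $\mathfrak{M}''_t$ of $M''_t$. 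Subtracting produces a linear relation supported on $\mathfrak{M}'_t \cup \mathfrak{M}''_t \subseteq \mathfrak{M}' \cup \mathfrak{M}''$, and this is exactly where the hypothesis bites: since $\mathfrak{M}' \cup \mathfrak{M}''$ is linearly independent, every coefficient must vanish, forcing $a_m = 0$ for $m \in \mathfrak{M}'_t \setminus \mathfrak{M}''_t$, $b_m = 0$ for $m \in \mathfrak{M}''_t \setminus \mathfrak{M}'_t$, and $a_m = b_m$ for $m \in \mathfrak{M}'_t \cap \mathfrak{M}''_t$. Hence $v = \sum_{m \in \mathfrak{M}'_t \cap \mathfrak{M}''_t} a_m m$, which is what we wanted.

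The only step that is not purely formal is this coefficient-matching argument, and the hypothesis on $\mathfrak{M}' \cup \mathfrak{M}''$ is tailored precisely for it; the rest of the proof is unfolding definitions. So I do not anticipate any real obstacle, but I would double-check that the argument really uses linear independence of the full union (rather than of each basis individually), since this is exactly what prevents a naive ``two expansions of the same vector'' from failing when the bases $\mathfrak{M}'$ and $\mathfrak{M}''$ overlap in a non-disjoint way.
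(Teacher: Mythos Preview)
Your proof is correct and follows essentially the same approach as the paper: both reduce to showing that $\mathfrak{M}'_t \cap \mathfrak{M}''_t$ is a basis for $M'_t \cap M''_t$ using the linear independence of $\mathfrak{M}' \cup \mathfrak{M}''$, and both verify the support identity by unwinding definitions. The only differences are cosmetic---you swap the order of the two claims and explicitly carry out the coefficient-matching argument that the paper defers to ``standard linear algebra.''
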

\begin{proof}
We want to show that $\mathfrak{M}' \cap \mathfrak{M}''$ is a filtration compatible basis for $M'_{\bullet} \cap M''_{\bullet}$, i.e., 
\[
(\mathfrak{M}' \cap \mathfrak{M}'')_{t} = \mathfrak{M}' \cap \mathfrak{M}'' \cap M'_{t} \cap M''_{t} = \mathfrak{M}'_{t} \cap \mathfrak{M}''_{t}
\] 
is a basis for $(M'_{\bullet} \cap M''_{\bullet})_{t} = M'_{t} \cap M''_{t}$ for all $t \in T$. 
By standard linear algebra, 
for subspaces $V',V''\subseteq V$ of some vector space $V$ with respective bases $\mathfrak V'$ and $\mathfrak V''$, the intersection $\mathfrak V' \cap \mathfrak V''$ of the bases is a basis for the intersection $V' \cap V''$ of spaces if the union $\mathfrak V' \cup \mathfrak V''$ of the bases is linearly independent.
In particular, 
$\mathfrak{M}'_{t} \cap \mathfrak{M}''_{t}$ is a basis for $M'_{t} \cap M''_{t}$ since the union $\mathfrak{M}'_{t} \cup \mathfrak{M}''_{t} \subseteq \mathfrak{M}' \cup \mathfrak{M}''$ is linearly independent by assumption. 
Thus, $\mathfrak{M}' \cap \mathfrak{M}''$ is a filtration compatible basis for $M' \cap M''$.

For the supports, we have
\begin{align*}
\supp_{M'_{\bullet} \cap M''_{\bullet}}(m) 
&= \{t \in T \mid m \in M'_{t} \cap M''_{t}\} \\
&= \{t \in T \mid m \in M'_{t}\} \cap \{t \in T \mid m \in M''_{t}\} \\
&= \supp_{M'_{\bullet}}(m) \cap \supp_{M''_{\bullet}}(m)
\end{align*}
for all $m \in \mathfrak{M}' \cap \mathfrak{M}''$, proving the claim.
\end{proof}

Finally, we state a version of the rank-nullity-theorem for filtrations.

\begin{lem}\label{lem:quotient_basis}
Let $\phi_\bullet\colon M_\bullet \to P_\bullet$ be a morphism of filtrations of vector spaces, let $\mathfrak{M}$ be a filtration compatible basis for $M_\bullet$, let $\mathfrak{M}' = \mathfrak{M} \cap \ker\phi$, and assume that $\mathfrak{M}'' = (\phi(m))_{m \in \mathfrak{M}\setminus\mathfrak{M}'}$ is a linearly independent family of vectors.
	Then $\mathfrak{M}'$ is a filtration compatible basis for $\ker\phi_\bullet$, and $\mathfrak{M}'' $ is a filtration compatible basis for $\im\phi_\bullet$.
Moreover, 
\[
\supp_{\ker\phi_\bullet}(m')=\supp_{M_\bullet}(m')
\quad\text{and}\quad
\supp_{\im\phi_\bullet}(\phi(m))=\supp_{M_\bullet}(m)
\]
for all $m'\in\mathfrak{M}'$ and for all $m \in \mathfrak{M}\setminus\mathfrak{M}'$.
\end{lem}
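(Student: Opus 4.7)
The plan is to first establish the two basis claims without any filtration, then bootstrap to filtration compatibility by rerunning the same argument inside each stage $M_t$, and finally read off the support formulas from the explicit descriptions of $\mathfrak{M}'$ and $\mathfrak{M}''$ that emerge.

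For the unfiltered picture, the key input is that $\mathfrak{M}'' = (\phi(m))_{m \in \mathfrak{M}\setminus\mathfrak{M}'}$ is linearly independent, which means $\phi$ is injective on $\mathrm{span}(\mathfrak{M}\setminus\mathfrak{M}')$. Given any $x = \sum_{m \in \mathfrak{M}} a_m m \in \ker\phi$, applying $\phi$ and using that $\phi(m) = 0$ exactly when $m \in \mathfrak{M}'$ yields $\sum_{m \in \mathfrak{M}\setminus\mathfrak{M}'} a_m \phi(m) = 0$, so $a_m = 0$ for those $m$, forcing $x \in \mathrm{span}(\mathfrak{M}')$. Thus $\mathfrak{M}'$ spans $\ker\phi$, and it is linearly independent as a subset of $\mathfrak{M}$, so it is a basis. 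Likewise, every element of $\im\phi$ has the form $\phi(x) = \sum_{m \in \mathfrak{M}\setminus\mathfrak{M}'} a_m \phi(m)$, showing that $\mathfrak{M}''$ spans $\im\phi$; together with the linear independence hypothesis, $\mathfrak{M}''$ is a basis for $\im\phi$.

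To promote these to filtration compatible bases, I would fix $t \in T$ and observe that $\mathfrak{M}_t$ is a basis for $M_t$ while $(\phi(m))_{m \in \mathfrak{M}_t \setminus \mathfrak{M}'}$ is a subset of $\mathfrak{M}''$ and hence linearly independent. Applying the argument above to the restriction $\phi|_{M_t}$ then yields that $\mathfrak{M}' \cap M_t = \mathfrak{M}_t \cap \ker\phi$ is a basis for $\ker\phi_t$, which is exactly filtration compatibility for $\mathfrak{M}'$, and the support formula $\supp_{\ker\phi_\bullet}(m') = \{t : m' \in M_t\} = \supp_{M_\bullet}(m')$ for $m' \in \mathfrak{M}'$ follows immediately.

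The step I expect to be the main obstacle is the analogous statement for $\mathfrak{M}''$, because filtration compatibility for $\im\phi_\bullet$ requires identifying $\mathfrak{M}'' \cap \phi(M_t)$ with the set $\{\phi(m) : m \in \mathfrak{M}_t \setminus \mathfrak{M}'\}$, and only the inclusion $\supseteq$ is immediate. For $\subseteq$, suppose $\phi(m) \in \mathfrak{M}''$ lies in $\phi(M_t)$, say $\phi(m) = \phi(x)$ with $x = \sum_{m' \in \mathfrak{M}_t} c_{m'} m'$. Rearranging produces the relation $\phi(m) - \sum_{m' \in \mathfrak{M}_t \setminus \mathfrak{M}'} c_{m'} \phi(m') = 0$ among elements of $\mathfrak{M}''$. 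If $m \notin \mathfrak{M}_t$, then $\phi(m)$ occurs in this relation with coefficient $1$ and cannot cancel with any other term, contradicting linear independence of $\mathfrak{M}''$; hence $m \in \mathfrak{M}_t \setminus \mathfrak{M}'$. Once this identification is in hand, filtration compatibility of $\mathfrak{M}''$ for $\im\phi_\bullet$ is immediate since the displayed set is the image under $\phi$ of a basis of a complement of $\ker\phi_t$ in $M_t$, and the support formula $\supp_{\im\phi_\bullet}(\phi(m)) = \{t : m \in M_t\} = \supp_{M_\bullet}(m)$ drops out of the same identification.
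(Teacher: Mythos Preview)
Your argument is correct. The route differs from the paper's in two places worth noting.

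For the kernel, the paper observes that $\ker\phi_\bullet = M_\bullet \cap \Delta\ker\phi$ (the intersection with the constant filtration on $\ker\phi$) and then invokes the preceding intersection lemma (\cref{lem:intersection}) to conclude both filtration compatibility and the support formula in one stroke. Your approach instead reruns the unfiltered rank--nullity argument at each level $t$, which is entirely self-contained and avoids any dependence on earlier lemmas. The paper's version is more modular; yours is more elementary.

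For the image, both proofs must identify $\mathfrak{M}'' \cap \im\phi_t$ with $\{\phi(m) : m \in \mathfrak{M}_t \setminus \mathfrak{M}'\}$, but they attack opposite inclusions as the ``hard'' one. The paper shows the right-hand set is contained in $\mathfrak{M}''_t$ and generates $\im\phi_t$, then uses that bases are minimal generating sets to force equality. You instead show the left-to-right inclusion directly via a linear-independence contradiction. Both work; your argument is arguably more explicit about where the hypothesis on $\mathfrak{M}''$ enters. One small point you leave implicit (as does the paper, in part): identifying $\ker\phi_t$ with $\ker\phi \cap M_t$ and $\im\phi_t$ with $\phi(M_t)\subseteq P$ uses that $P_\bullet$ is a filtration, i.e., that $P_t \hookrightarrow P$ is injective.
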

\begin{proof}
We show that $\mathfrak{M}' = \mathfrak{M} \cap \ker\phi$ is a filtration compatible basis for $\ker\phi_{\bullet}$.
Because $M_{\bullet}$ is a filtration, we have $\ker\phi_{t} = M_{t} \cap \ker\phi$ for all $t\in T$. 
Denoting the constant filtration of $\ker\phi$ by $\Delta\ker\phi$, we have $\ker\phi_{\bullet} = M_{\bullet} \cap \Delta\ker\phi$. 
By standard linear algebra, $\mathfrak{M}'' = (\phi(m))_{m \in \mathfrak{M}\setminus\mathfrak{M}'}$ being linearly independent implies that $\mathfrak{M'}$ is a basis for $\ker\phi$. 
Hence, $\mathfrak{M}'$ is a filtration compatible basis for the constant filtration $\Delta\ker\phi$.
By assumption, $\mathfrak{M} \cup \mathfrak{M}' = \mathfrak{M}$ is linearly independent, so we can apply \cref{lem:intersection} to obtain that the intersection $\mathfrak{M} \cap \mathfrak{M}' = \mathfrak{M}'$ is a filtration compatible basis for the intersection $M_{\bullet} \cap \Delta\ker\phi = \ker\phi_{\bullet}$ satisfying
\[
\supp_{\ker\phi_{\bullet}}(m') = \supp_{\Delta\ker\phi}(m') \cap \supp_{M_{\bullet}}(m') = T \cap \supp_{M_{\bullet}}(m') = \supp_{M_{\bullet}}(m')
\]
for all $m'\in\mathfrak{M}'$.

To show that $\mathfrak{M}''$ is a filtration compatible basis for $\im\phi_{\bullet}$, we need to show that $\mathfrak{M}''_{t} = \mathfrak{M}'' \cap \im\phi_{t}$ is a basis for $\im\phi_{t}$ for all $t \in T$. 
Since we assume $\mathfrak{M}''$ to be linearly independent, $\mathfrak{M}''_{t}$ is linearly independent as well.
It remains to be shown that $\mathfrak{M}''_{t}$ is also a generating set $\im\phi_{t}$. 
Because $\mathfrak{M}_{t}$ is a basis for $M_{t}$ by assumption, the family $(\phi_{t}(m))_{m \in \mathfrak{M}_{t}\setminus\ker\phi_{t}}$ generates $\im\phi_{t}$.
Hence, it suffices to show that 
\[
(\phi_{t}(m))_{m \in \mathfrak{M}_{t}\setminus\ker\phi_{t}} \subseteq (\phi(m))_{m \in \mathfrak{M}\setminus\mathfrak{M}'} \cap \im\phi_{t} = \mathfrak{M}''_{t}.
\]
So let $m \in \mathfrak{M}_{t}\setminus\ker\phi_{t}$. 
We have $\phi_{t}(m) \neq 0 \in P_{t}$, so since $P_{\bullet}$ is a filtration we also get $\phi(m) \neq 0 \in P$. 
Thus, $m\in \mathfrak{M}_{t} \setminus \ker\phi \subseteq \mathfrak{M} \setminus \ker\phi = \mathfrak{M}\setminus\mathfrak{M}'$.
This implies $\phi_{t}(m) = \phi(m) \in \mathfrak{M}''_{t}$ as needed, so $\mathfrak{M}''_{t}$ generates $\im\phi_{t}$ and hence forms a basis for $\im\phi_{t}$.
Bases are minimal generating sets, so we obtain an equality $(\phi_{t}(m))_{m \in \mathfrak{M}_{t}\setminus\ker\phi_{t}} = (\phi(m))_{m \in \mathfrak{M}\setminus\mathfrak{M}'} \cap \im\phi_{t}$ from the previously shown inclusion. 
Thus, for $m \in \mathfrak{M}\setminus\mathfrak{M}'$ we have $\phi(m) \in \im\phi_{t}$ if and only if $m \in \mathfrak{M}_{t} \subseteq M_{t}$.
This yields $\supp_{\im\phi_{\bullet}}(\phi(m)) = \supp_{M_{\bullet}}(m)$, which finishes the proof.
\end{proof}

Note that if one drops the assumption that the image $\im\phi_{\bullet}$ is a filtration from the setting of the lemma above, it may happen that $\mathfrak{M}' = \mathfrak{M} \cap \ker\phi$ is a basis for $\ker\phi$ but not a filtration compatible basis for the filtration $\ker\phi_{\bullet}$.

\section{Computing Image Persistence Barcodes}\label{sec:computation}
Recall that we fixed a finite totally ordered index set $T = \{0,\dots,N\}$ and a field $\mathbb{F}$ over which we consider vector spaces. For our purposes, a chain (resp.\ cochain) complex is a graded finite dimensional vector space with a differential of degree $-1$ (resp.\ $1$) that squares to $0$. A filtration of (co)chain complexes is a filtration of finite dimensional vector spaces with differentials that commute with the inclusion maps. 
Recall that a basis for the final vector space in a filtration is called filtration compatible if it yields bases for the constituent vector spaces of the filtration by intersecting. Further, recall that if the basis is ordered, we say that it is a filtration compatible ordered basis if its order refines the order in which the basis elements appear in the filtration.

\begin{defi}
If $C_{\bullet}$ is a filtration of the (co)chain complex $C$ with a filtration compatible ordered basis $\mathfrak{C}$, then the matrix $D$ representing the (co)boundary operator on $C$ with respect to $\mathfrak{C}$ is called \emph{filtration (co)boundary matrix}.
\end{defi}

\begin{ex}\label{ex:simp_filt}
If 
\(
K_{\bullet}:\emptyset=K_{0}\subseteq K_{1}\subseteq\dots\subseteq K_{N}=K
\)
is a filtration of finite simplicial complexes, we get a filtration of chain complexes $C_{*}(K_{\bullet})$. A filtration compatible ordered basis is given by the simplices of $K$, ordered by a linear refinement of the order in which they appear in the filtration. If $D^{K}$ is a corresponding filtration boundary matrix, then one can check (see \cite{MR2854319}) that $(D^{K})^{\perp}$ is a filtration coboundary matrix for the filtration of relative cochains $C^{*}(K, K_{\bullet})$, representing the coboundary operator on $C^{*}(K)$ with respect to the dual basis corresponding to the simplices of $K$, ordered by the opposite of the filtration order. Here, $(-)^{\perp}$ denotes taking the transpose of a matrix along its anti-diagonal.
\end{ex}

To avoid notational clutter, we will from now on only talk about chain complexes in the general setting, but everything also straightforwardly applies to cochain complexes.

\begin{defi}
If $X$ is a matrix, we write $x_i$ for the $i$-th column of the matrix $X$. For a non-zero column vector $x_{i}$, we define $\piv x_{i}$ as the largest index where the column has a non-zero entry. We write $\pivs X$ for the set of all indices which occur as pivots of non-zero columns of $X$. A matrix is called \emph{reduced} if no two non-zero columns have the same pivot. 
\end{defi}

Note that any set of non-zero vectors with unique pivots is linearly independent. In particular, the non-zero columns of a reduced matrix are linearly independent.

Computing the barcode for the homology of a filtered persistent chain complex is done by \emph{reducing} a filtration boundary matrix $D$, i.e., performing a variant of Gaussian elimination on the columns of this matrix until one obtains a reduced matrix. 
This can be expressed as finding a reduced matrix $R$ and a full-rank upper-triangular matrix $V$ such that $R = DV$.
The barcode for persistent homology may then be obtained from this data as follows.

\begin{thm}[Cohen-Steiner et al. \cite{MR2389318}]
\label{thm:single_filt}
Let $D$ be a filtration boundary matrix of a filtration of chain complexes $C_{\bullet}$ and assume we have a full-rank and upper-triangular matrix $V$ such that $R=DV$ is reduced. Then $H_*(C_{\bullet})$ has a barcode given by the multiset
\[
\left\{\supp_{C_{\bullet}} (r_j) \setminus \supp_{C_{\bullet}} (v_j)\mid r_j\neq 0\right\}
\cup
\left\{\supp_{C_{\bullet}} (v_i) \mid r_i=0~\text{and} ~  i\notin\pivs R\right\}.
\]
\end{thm}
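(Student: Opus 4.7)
The plan is to apply \Cref{prop:quotient_barcode} to the inclusion of filtrations $B_{*}(C_{\bullet}) \subseteq Z_{*}(C_{\bullet})$, after producing a filtration compatible basis of $Z_{*}(C_{\bullet})$ that contains a filtration compatible basis of $B_{*}(C_{\bullet})$; since $Z_{*}(C_{\bullet})/B_{*}(C_{\bullet}) = H_{*}(C_{\bullet})$ pointwise, the proposition's formula should reproduce the claimed multiset.

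First, I would change from the basis $\mathfrak{C} = (c_{j})$ to $\mathfrak{V} = (v_{j})$. Because $V$ is full-rank upper triangular, $v_{j} = V_{jj} c_{j} + \sum_{i < j} V_{ij} c_{i}$ with $V_{jj} \neq 0$; combined with the hypothesis that $\mathfrak{C}$ is filtration compatible ordered (so $i \le j$ forces $\supp_{C_{\bullet}} c_{j} \subseteq \supp_{C_{\bullet}} c_{i}$), this yields $\supp_{C_{\bullet}}(v_{j}) = \supp_{C_{\bullet}}(c_{j})$. \Cref{lem:filtration_basis_exchange} then makes $\mathfrak{V}$ a filtration compatible basis for $C_{\bullet}$. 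Applying \Cref{lem:quotient_basis} to $\partial_{\bullet} \colon C_{\bullet} \to C_{\bullet}$ with $\mathfrak{M} = \mathfrak{V}$ gives $\mathfrak{M}' = \{v_{j} : r_{j} = 0\}$ as a filtration compatible basis for $Z_{*}(C_{\bullet})$ and $\mathfrak{M}'' = \{r_{j} : r_{j} \neq 0\}$ as a filtration compatible basis for $B_{*}(C_{\bullet})$, with $\supp_{B_{*}}(r_{j}) = \supp_{C_{\bullet}}(v_{j})$; the required linear independence of $\mathfrak{M}''$ follows from $R$ being reduced.

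The crux is to replace $\mathfrak{M}'$ with a filtration compatible basis of $Z_{*}(C_{\bullet})$ that contains $\mathfrak{M}''$, and the enabling combinatorial fact is the pairing lemma $\pivs R \subseteq \{i : r_{i} = 0\}$. If $\piv r_{j} = i$ then $\partial r_{j} = 0$ (since $r_{j} \in B_{*} \subseteq Z_{*}$) together with the pivot property expresses $\partial c_{i}$ as a linear combination of $(\partial c_{l})_{l < i}$; substituting into $r_{i} = \partial v_{i}$ and using that $V^{-1}$ is upper triangular (so $\partial c_{l} = \sum_{m \le l} (V^{-1})_{ml} r_{m}$) gives $r_{i} \in \mathrm{span}(r_{m} : m < i)$, which forces $r_{i} = 0$ by linear independence of the nonzero columns of $R$. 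I would then define
\[
\mathfrak{Z} = \{r_{j} : r_{j} \neq 0\} \cup \{v_{i} : r_{i} = 0,\ i \notin \pivs R\}
\]
and verify it is a filtration compatible basis for $Z_{*}(C_{\bullet})$. Linear independence is checked by expanding a hypothetical vanishing combination in $\mathfrak{C}$ and inspecting the coefficient of $c_{M}$ at the largest index $M$ occurring (either as $\piv r_{j}$ with nonzero $\alpha_{j}$, or as an index $i$ with nonzero $\beta_{i}$): upper triangularity of $V$ plus the pairing lemma leave exactly one nonzero contribution, a contradiction. The cardinality $|\mathfrak{Z}| = |\pivs R| + (|\{i : r_{i} = 0\}| - |\pivs R|) = \dim Z_{*}$ again uses the pairing lemma for disjointness. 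Filtration compatibility at time $t$ reduces to the identity $\supp_{C_{\bullet}}(r_{j}) = \supp_{C_{\bullet}}(c_{\piv r_{j}})$, which follows from the upper triangular expansion of $r_{j}$ together with the ordered property of $\mathfrak{C}$; combining with the bijection $j \leftrightarrow \piv r_{j}$ yields $|\mathfrak{Z} \cap Z_{t}| = \dim Z_{t}$ for every $t$.

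Finally, with $\mathfrak{B} = \{r_{j} : r_{j} \neq 0\} \subseteq \mathfrak{Z}$, \Cref{prop:quotient_barcode} applied to $B_{*}(C_{\bullet}) \subseteq Z_{*}(C_{\bullet})$ produces the barcode of $H_{*}(C_{\bullet})$: each $r_{j} \in \mathfrak{B}$ contributes $\supp_{Z_{*}}(r_{j}) \setminus \supp_{B_{*}}(r_{j}) = \supp_{C_{\bullet}}(r_{j}) \setminus \supp_{C_{\bullet}}(v_{j})$ (the first equality because $r_{j}$ is a cycle, the second by \Cref{lem:quotient_basis}), and each $v_{i} \in \mathfrak{Z} \setminus \mathfrak{B}$ contributes $\supp_{Z_{*}}(v_{i}) = \supp_{C_{\bullet}}(v_{i})$, matching the two multisets in the statement. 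The principal obstacle is the pairing lemma together with the pointwise verification that $\mathfrak{Z}$ is filtration compatible: both encode the essential combinatorial output of the reduction algorithm, and without them the naive substitution $v_{\piv r_{j}} \mapsto r_{j}$ fails to produce a basis of $Z_{*}(C_{\bullet})$ of the correct dimension at every filtration index.
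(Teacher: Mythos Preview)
Your proof is correct and follows the same overall architecture as the paper: produce nested filtration compatible bases $\mathfrak{B}=\cols R\subseteq\mathfrak{Z}$ for $B_{*}(C_{\bullet})\subseteq Z_{*}(C_{\bullet})$ and then read off the barcode of the quotient via \Cref{prop:quotient_barcode}. The paper does not prove the statement directly but obtains it as the special case $\varphi_{\bullet}=\id$ of \Cref{cor:image_barcode}, whose proof (through \Cref{lem:boundary_basis,lem:z_basis,lem:cap_basis}) constructs exactly the same $\mathfrak{B}$ and $\mathfrak{Z}$.

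The one substantive difference is in how you establish that $\mathfrak{Z}$ is a filtration compatible basis for $Z_{*}(C_{\bullet})$. You prove the pairing inclusion $\pivs R\subseteq\{i:r_{i}=0\}$ and then carry out a hands-on linear independence check and a per-index dimension count. The paper's \Cref{lem:z_basis}, specialized to $\varphi=\id$, bypasses both: it first shows that the larger family $\mathfrak{X}=\cols R\cup\{v_{j}\mid j\notin\pivs R\}$ is a filtration compatible basis for \emph{all} of $C_{\bullet}$, using \Cref{lem:filtration_basis_exchange} with the support-preserving bijection $\mathfrak{X}\to\cols V$ given by $r_{j}\mapsto v_{\piv r_{j}}$ and $v_{j}\mapsto v_{j}$; then a second application of \Cref{lem:quotient_basis} to $\partial_{\bullet}$ with the basis $\mathfrak{X}$ yields $\mathfrak{X}\cap\ker\partial=\mathfrak{Z}$ as a filtration compatible basis for $Z_{*}(C_{\bullet})$ automatically, together with the support formula. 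This route never needs the pairing lemma or any pointwise counting, at the modest cost of passing through the auxiliary basis $\mathfrak{X}$. Your route is more self-contained and makes the combinatorics of the reduction explicit; the paper's is shorter because it recycles \Cref{lem:filtration_basis_exchange,lem:quotient_basis} one more time.
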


The theorem is formulated in a different language by Cohen-Steiner et al., but the version above will also follow as a special case from \cref{cor:image_barcode}. Note that the theorem is also compatible with the homological grading, i.e., one gets a barcode for $H_{d}(C_{\bullet})$ by choosing a filtration compatible ordered basis that is also compatible with the grading and then only taking those intervals which come from columns of $D$ corresponding to $d$-dimensional basis elements.

\subsection{Image Barcodes via Matrix Reduction}\label{sec:main_proof}
We now turn to the setting of image persistence.
Let $C_{\bullet}$ and $C'_{\bullet}$ be filtrations of the chain complexes $C$ and $C'$ with corresponding filtration compatible ordered bases $\mathfrak{C}$ and $\mathfrak{C}'$. Let $D$ and $D'$ be the corresponding filtration boundary matrices. Assume that we are given an injection of filtrations $\varphi_{\bullet}\colon C_{\bullet}\to C'_{\bullet}$
 such that the map $\varphi \colon C \to C'$ on the final filtration step is an isomorphism. Let $F$ be the matrix representing $\varphi$ with respect to $\mathfrak{C}$ and $\mathfrak{C}'$ and define $D^{\varphi}=DF^{-1}=F^{-1}D'$. 

\begin{ex}\label{ex:im_simp_filt}
If 
\[
K_{\bullet}:\emptyset=K_{0}\subseteq K_{1}\subseteq\dots\subseteq K_{N}=K
\qquad
\text{and} 
\qquad
L_{\bullet}:\emptyset=L_{0}\subseteq L_{1}\subseteq\dots\subseteq L_{N}=L
\]
are filtrations of finite simplicial complexes, we get filtrations of chain complexes $C_{*}(K_{\bullet})$ and $C_{*}(L_{\bullet})$. If we are given a monomorphism $L_{\bullet}\to K_{\bullet}$ that induces an isomorphism $L \to K$ (e.g. by assuming $L_{i}\subseteq K_{i}$ for all $i$ and $L=K$), then we are in the setting above. Filtration compatible ordered bases are given by the simplices of $K$ and $L$, ordered by a linear refinement of the order in which they appear in the respective filtrations. If $D^{L}$, $D^{K}$ and $D^{f}$ are the corresponding boundary matrices for $C_{*}(L_{\bullet})\to C_{*}(K_{\bullet})$, then analogously to \cref{ex:simp_filt}, we obtain that $(D^{L})^{\perp}$, $(D^{K})^{\perp}$ and $(D^{f})^{\perp}$ are the coboundary matrices for the relative cohomology counterpart $C^{*}(K, K_{\bullet})\to C^{*}(L, L_{\bullet})$.
\end{ex}

Our goal is to determine a barcode for $\im H_{*}(\varphi_{\bullet})$ via matrix reduction, so assume we have $R=DV$ and $R^{\varphi}=D^{\varphi}V^{\varphi}$ reduced with $V$ and $V^{\varphi}$ full-rank and upper-triangular. The columns of $R$, $D$, $V$, $R^{\varphi}$ and $D^{\varphi}$ should be interpreted as coordinate vectors with respect to $\mathfrak{C}$ and the columns of $V^{\varphi}$ as coordinate vectors with respect to $\mathfrak{C}'$. Recall that if $X$ is a matrix, we denote its $j$th column by $x_j$. 
The main result can then be stated as follows.

\begin{thm}\label{cor:image_barcode}
The image of $H_{*}(\varphi_{\bullet})$ has a barcode given by the multisets
\[
\left\{\supp_{C_{\bullet}} (r^{\varphi}_j) \setminus \supp_{C'_{\bullet}} (v^{\varphi}_j)\neq \emptyset\mid r^{\varphi}_j\neq 0\right\}
\cup
\left\{\supp_{C_{\bullet}} (v_i) \mid r^{\varphi}_{i}=0~\text{and} ~ i\notin\pivs R^{\varphi}\right\}.
\]
\end{thm}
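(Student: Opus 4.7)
The plan is to apply \cref{prop:quotient_barcode} to the subquotient presentation
\[
\im H_{*}(\varphi_{\bullet}) \cong \varphi_{\bullet}(Z_{*}(C_{\bullet}))/\bigl(\varphi_{\bullet}(Z_{*}(C_{\bullet})) \cap B_{*}(C'_{\bullet})\bigr)
\]
recalled in the introduction. This requires producing a filtration compatible basis of the numerator that contains a filtration compatible basis of the denominator; both will be assembled from the two reductions $R = DV$ and $R^{\varphi} = D^{\varphi} V^{\varphi}$ via two applications of \cref{lem:quotient_basis}.

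First, since $V$ is upper-triangular with respect to the filtration ordering of $\mathfrak{C}$, the columns $(v_i)$ form a filtration compatible ordered basis for $C_{\bullet}$, and applying \cref{lem:quotient_basis} to the boundary morphism $D$ shows that $(v_i)_{r_i = 0}$ is a filtration compatible basis for $Z_{*}(C_{\bullet})$ with supports $\supp_{C_{\bullet}}(v_i)$. Pushing forward via the injective filtration morphism $\varphi_{\bullet}$, the family $(\varphi(v_i))_{r_i = 0}$ is a filtration compatible basis for $\varphi_{\bullet}(Z_{*}(C_{\bullet}))$ with the same supports. Analogously, applying \cref{lem:quotient_basis} to $D'$ with the filtration compatible basis $(v^{\varphi}_j)$ of $C'_{\bullet}$ and using the identity $D' v^{\varphi}_j = \varphi(r^{\varphi}_j)$, the family $(\varphi(r^{\varphi}_j))_{r^{\varphi}_j \neq 0}$ is a filtration compatible basis for $B_{*}(C'_{\bullet})$ with supports $\supp_{C'_{\bullet}}(v^{\varphi}_j)$. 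Each $\varphi(r^{\varphi}_j)$ also lies in $\varphi(Z_{*}(C))$, since $r^{\varphi}_j = D(\varphi^{-1}(v^{\varphi}_j)) \in B_{*}(C) \subseteq Z_{*}(C)$, and a dimension count shows that it is in fact a basis for the top-level intersection $\varphi(Z_{*}(C)) \cap B_{*}(C')$.

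The crucial step is then to replace the basis $(\varphi(v_i))_{r_i = 0}$ of $\varphi_{\bullet}(Z_{*}(C_{\bullet}))$ by a filtration compatible basis of the form $\mathfrak{B} \cup \mathfrak{E}$, where $\mathfrak{B} = (\varphi(r^{\varphi}_j))_{r^{\varphi}_j \neq 0}$ is the basis above and $\mathfrak{E} = (\varphi(v_i))_{i \in S}$ is indexed by $S = \{i \mid r^{\varphi}_i = 0,\ i \notin \pivs R^{\varphi}\}$. The approach is to use \cref{lem:filtration_basis_exchange} along a support-preserving bijection between $(\varphi(v_i))_{r_i = 0}$ and $\mathfrak{B} \cup \mathfrak{E}$, pairing each boundary $\varphi(r^{\varphi}_j)$ with a $\varphi(v_{i(j)})$ having support $\supp_{C_{\bullet}}(v_{i(j)}) = \supp_{C_{\bullet}}(r^{\varphi}_j)$. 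Once this mixed basis is in place, applying \cref{lem:intersection} to $\mathfrak{B} \cup \mathfrak{E}$ and $\mathfrak{B}$ (viewed as a filtration compatible basis of $B_{*}(C'_{\bullet})$, whose union is trivially linearly independent) promotes $\mathfrak{B}$ to a filtration compatible basis of $\varphi_{\bullet}(Z_{*}(C_{\bullet})) \cap B_{*}(C'_{\bullet})$ with supports $\supp_{C_{\bullet}}(r^{\varphi}_j) \cap \supp_{C'_{\bullet}}(v^{\varphi}_j)$. Finally, \cref{prop:quotient_barcode} applied to the inclusion $\mathfrak{B} \subseteq \mathfrak{B} \cup \mathfrak{E}$ yields the stated barcode: the $\mathfrak{B}$-terms contribute $\supp_{C_{\bullet}}(r^{\varphi}_j) \setminus \supp_{C'_{\bullet}}(v^{\varphi}_j)$ (retained only when nonempty), and the $\mathfrak{E}$-terms contribute $\supp_{C_{\bullet}}(v_i)$ for $i \in S$. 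The main obstacle will be this exchange bijection: one must carefully match the pairing data of the two reductions so that surviving essential cycles are indexed precisely by $S$ with correct supports, a cardinality/pairing argument that rests on the shared indexing of $\mathfrak{C}$ and $\mathfrak{C}'$ and on the fact that $\supp_{C_{\bullet}}(v_i)$ is determined solely by the entrance time of the underlying basis element $c_i$ in $\mathfrak{C}$.
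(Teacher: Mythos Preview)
Your overall strategy matches the paper's: write $\im H_*(\varphi_{\bullet})$ as $\varphi_{\bullet}(Z_*(C_{\bullet}))/(\varphi_{\bullet}(Z_*(C_{\bullet})) \cap B_*(C'_{\bullet}))$, build nested filtration compatible bases $\mathfrak{B} \subseteq \mathfrak{B} \cup \mathfrak{E}$ for denominator and numerator, and invoke \cref{prop:quotient_barcode}. The paper does exactly this through \cref{lem:boundary_basis,lem:z_basis,lem:cap_basis}. There is, however, a genuine gap in your construction of $\mathfrak{E}$. You index it by $S = \{i \mid r^{\varphi}_i = 0,\ i \notin \pivs R^{\varphi}\}$, but for $\varphi(v_i)$ to lie in $\varphi(Z_*(C))$ at all you need $Dv_i = r_i = 0$, and the condition $r^{\varphi}_i = 0$ does not imply this. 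The zero-column indices of $R$ and $R^{\varphi}$ are governed by the two different orderings $\mathfrak{C}$ and $\mathfrak{C}'$, and small examples (e.g.\ two edges with the same boundary whose relative order is swapped between the filtrations) already give $\{i : r_i = 0\} \neq \{i : r^{\varphi}_i = 0\}$, with different resulting supports $\supp_{C_\bullet}(v_i)$. So as written, $\mathfrak{E}$ need not even be contained in $\varphi(Z_*(C))$, and the exchange via \cref{lem:filtration_basis_exchange} cannot be set up.

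The paper avoids this by performing the basis exchange one level up: it first constructs $\mathfrak{X} = \cols R^{\varphi} \cup \{v_j \mid j \notin \pivs R^{\varphi}\}$ as a filtration compatible basis of all of $C_{\bullet}$ via \cref{lem:filtration_basis_exchange} (using the bijection $r^{\varphi}_j \mapsto v_{\piv r^{\varphi}_j}$, $v_j \mapsto v_j$ with $\cols V$), and only then intersects with $\ker\partial$ using \cref{lem:quotient_basis}. That intersection automatically selects those $v_j$ with $r_j = 0$, and this is the index set appearing in \cref{lem:z_basis}; the condition $r^{\varphi}_i = 0$ in the displayed theorem statement appears to be a slip for $r_i = 0$. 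Your route of exchanging bases directly at the level of $Z_*(C_{\bullet})$ can be salvaged with the corrected index set $S' = \{i \mid r_i = 0,\ i \notin \pivs R^{\varphi}\}$: one checks $\pivs R^{\varphi} \subseteq \{i \mid r_i = 0\}$ from $D r^{\varphi}_j = D^{2}F^{-1}v^{\varphi}_{j} = 0$, after which the pivot-matching bijection you describe is well-defined, support-preserving, and a bijection onto $(\varphi(v_i))_{r_i = 0}$ by a cardinality count.
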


The supports of column vectors that appear in the theorem can easily be determined from the initial data via pivots: If $\mathfrak{M}$ is a filtration compatible ordered basis for a filtration $M_{\bullet}$, then we can consider elements of the filtered vector space $M$ via their coordinate vectors with respect to $\mathfrak{M}$. Because $\mathfrak{M}$ is a filtration compatible ordered basis, we then have $\supp_{M_{\bullet}}(v)=\supp_{M_{\bullet}}(v')$ if and only if $\piv v=\piv v'$ for any two such coordinate vectors $v$ and $v'$. In particular, this means that in the setting of simplicial complexes we can determine the support of a column vector from the support of its pivot simplex.

The proof of \cref{cor:image_barcode} will be based on a sequence of intermediate results. 
As alluded to in the introduction, the general idea is to write 
\[
\im H_{*}(\varphi_{\bullet})\cong \frac{\varphi(Z_{*}(C_{\bullet}))}{\varphi(Z_{*}(C_{\bullet}))\cap B_{*}(C'_{\bullet})},
\]
find filtration compatible bases for the filtrations appearing on the right that include into each other and then apply \cref{prop:quotient_barcode}.

If $X$ is a matrix, we will write $\cols X$ for the family of all its non-zero column vectors.

\begin{lem}\label{lem:boundary_basis}
The family $\cols FR^{\varphi}$ is a filtration compatible basis for $B_{*}(C'_{\bullet})$, and for all $j$ with $r^{\varphi}_{j}\neq 0$ we have 
\[
\supp_{B_{*}(C'_{\bullet})}(Fr^{\varphi}_{j})=\supp_{C'_{\bullet}}(v^{\varphi}_{j}) .
\]
\end{lem}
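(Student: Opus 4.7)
The plan is to identify the non-zero columns of $FR^\varphi$ with the images under $\partial'$ of a filtration compatible basis of $C'_\bullet$, so that \cref{lem:quotient_basis} applied to the boundary operator delivers both assertions at once. First I would rewrite $FR^\varphi$ in a more useful form: since $D^\varphi = F^{-1}D'$, we have $FR^\varphi = FD^\varphi V^\varphi = D' V^\varphi$. Reading columns as coordinate vectors in $\mathfrak{C}'$, this means $Fr^\varphi_j = \partial'(v^\varphi_j)$ for every $j$, and the non-zero columns of $FR^\varphi$ correspond exactly to those indices $j$ with $r^\varphi_j \neq 0$, since $F$ is invertible.

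Next, I would verify that the columns of $V^\varphi$ form a filtration compatible basis for $C'_\bullet$. Because $V^\varphi$ is upper-triangular and full-rank, column $v^\varphi_j$ is a linear combination of the first $j$ elements of $\mathfrak{C}'$ with a non-zero coefficient on $c'_j$. Since $\mathfrak{C}'$ is a filtration compatible ordered basis, one has $\supp_{C'_\bullet}(c'_j)\subseteq \supp_{C'_\bullet}(c'_i)$ for $i<j$, so $v^\varphi_j$ lies in $C'_t$ precisely when $c'_j$ does. Hence $\supp_{C'_\bullet}(v^\varphi_j) = \supp_{C'_\bullet}(c'_j)$, and \cref{lem:filtration_basis_exchange} applied to the bijection $c'_j\mapsto v^\varphi_j$ shows that $\cols V^\varphi$ is a filtration compatible basis for $C'_\bullet$.

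With this in hand, I would apply \cref{lem:quotient_basis} to the morphism $\partial'_\bullet\colon C'_\bullet \to C'_\bullet$ with filtration compatible basis $\mathfrak{M} = \cols V^\varphi$. The sub-family $\mathfrak{M}' = \mathfrak{M}\cap\ker\partial'$ consists of those $v^\varphi_j$ with $\partial'(v^\varphi_j) = Fr^\varphi_j = 0$, which by invertibility of $F$ is equivalent to $r^\varphi_j = 0$. The remaining family $\mathfrak{M}'' = (Fr^\varphi_j)_{r^\varphi_j\neq 0}$ is linearly independent, because $R^\varphi$ is reduced (its non-zero columns have distinct pivots) and $F$ is invertible, so the non-zero columns of $FR^\varphi$ still have distinct pivots. \cref{lem:quotient_basis} then yields that $\cols FR^\varphi = \mathfrak{M}''$ is a filtration compatible basis for $\im\partial'_\bullet = B_{*}(C'_\bullet)$, and the support identity in that lemma specializes to $\supp_{B_{*}(C'_\bullet)}(Fr^\varphi_j) = \supp_{C'_\bullet}(v^\varphi_j)$ for every $j$ with $r^\varphi_j\neq 0$.

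The only step that requires any real care is the verification that $\cols V^\varphi$ is filtration compatible, since this is where the upper-triangular structure of $V^\varphi$ must be reconciled with the filtration order on $\mathfrak{C}'$; once that is in place, the statement follows by a direct bookkeeping application of \cref{lem:quotient_basis}.
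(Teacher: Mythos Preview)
Your proof is correct and follows essentially the same route as the paper: first establish that $\cols V^{\varphi}$ is a filtration compatible basis for $C'_{\bullet}$ via \cref{lem:filtration_basis_exchange}, then apply \cref{lem:quotient_basis} to the boundary operator $\partial'_{\bullet}$. One small imprecision: the non-zero columns of $FR^{\varphi}$ need not have distinct pivots (since $F$ is not assumed upper-triangular), but linear independence still follows simply because $F$ is invertible and the non-zero columns of $R^{\varphi}$ are linearly independent---which is exactly how the paper phrases it.
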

\begin{proof}
We start by showing that $\cols V^{\varphi}$ is a filtration compatible basis for $C'_{\bullet}$:
We have $\piv v^{\varphi}_{j}=j$ since $V^{\varphi}$ is full-rank and upper-triangular. 
It follows that $v^{\varphi}_{j}$ has the same support in $C'_{\bullet}$ as the $j$th element of $\mathfrak{C}'$. 
Thus, $\cols V^{\varphi}$ is a filtration compatible basis for $C'_{\bullet}$ by \cref{lem:filtration_basis_exchange}.

Next, note that $(\partial(v))_{v \in \cols V^{\varphi}\setminus\ker\partial} = \cols FR^{\varphi}$ is linearly independent since $R^{\varphi}$ is reduced and $F$ has full rank. 
Thus, we can apply \cref{lem:quotient_basis} to the map of filtrations $\partial_{\bullet} \colon C'_{\bullet} \to C'_{\bullet}$ and the filtration compatible basis $\cols V^{\varphi}$ to obtain that $\cols FR^{\varphi}$ is a filtration compatible basis for $B_{*}(C'_{\bullet}) = \im\partial_{\bullet}$. 
The assertion on the supports also follows from the support formula in \cref{lem:quotient_basis}.
\end{proof}

Now that we have a filtration compatible basis for $B_{*}(C'_{\bullet})$, we want to extend it to a filtration compatible basis for $\varphi_{\bullet}(Z_{*}(C_{\bullet}))$.

\begin{lem}\label{lem:z_basis}
The family
\[
\mathfrak{Z}=\cols FR^{\varphi}\cup \left\{Fv_{j}\mid r_{j}=0~\text{and} ~ j\notin\pivs R^{\varphi}\right\}.
\]
is a filtration compatible basis for $\varphi_{\bullet}(Z_{*}(C_{\bullet}))$, and for all $z\in\mathfrak{Z}$ we have
\[
\supp_{\varphi_{\bullet}(Z_{*}(C_{\bullet}))}(z)=\supp_{C_{\bullet}}(F^{-1}z) .
\]
\end{lem}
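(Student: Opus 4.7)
Plan: The overall strategy is to first produce a filtration compatible basis for $Z_*(C_\bullet)$ inside $C_\bullet$ and then push it forward through $\varphi_\bullet$ using \cref{lem:quotient_basis}. The central step is to exhibit a filtration compatible basis $\widetilde{\mathfrak{C}}$ for $C_\bullet$ whose intersection with $\ker\partial$ is precisely
\[
F^{-1}\mathfrak{Z}=\cols R^\varphi\cup\{v_j\mid r_j=0,\ j\notin\pivs R^\varphi\}.
\]

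For this I propose
\[
\widetilde{\mathfrak{C}}=\{v_j\mid j\notin\pivs R^\varphi\}\cup\cols R^\varphi.
\]
Since $V$ is full-rank upper triangular we have $\piv v_j=j$, and by reducedness of $R^\varphi$ its non-zero columns have distinct pivots which fill $\pivs R^\varphi$. The pivots of the elements of $\widetilde{\mathfrak{C}}$ therefore partition $\{1,\dots,\dim C\}$, so $\widetilde{\mathfrak{C}}$ is linearly independent of the correct cardinality and hence a basis of $C$. As each element of $\widetilde{\mathfrak{C}}$ shares its pivot with a distinct element of $\mathfrak{C}$, it shares the same $C_\bullet$-support, and \cref{lem:filtration_basis_exchange} upgrades $\widetilde{\mathfrak{C}}$ to a filtration compatible basis for $C_\bullet$.

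Applying \cref{lem:quotient_basis} to $\partial_\bullet\colon C_\bullet\to C_\bullet$ with basis $\widetilde{\mathfrak{C}}$, we observe that the columns of $R^\varphi=DF^{-1}V^\varphi$ all lie in $\im\partial\subseteq\ker\partial$, while $v_j\in\ker\partial$ iff $r_j=\partial v_j=0$, so $\widetilde{\mathfrak{C}}\cap\ker\partial=F^{-1}\mathfrak{Z}$. The remaining vectors $\partial v_j=r_j$ with $r_j\neq 0$ and $j\notin\pivs R^\varphi$ form a sub-family of the non-zero columns of the reduced matrix $R$, hence are linearly independent. The lemma then delivers $F^{-1}\mathfrak{Z}$ as a filtration compatible basis for $Z_*(C_\bullet)=\ker\partial_\bullet$ with $\supp_{Z_*(C_\bullet)}(z)=\supp_{C_\bullet}(z)$ for every $z\in F^{-1}\mathfrak{Z}$.

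A second application of \cref{lem:quotient_basis} to the monomorphism $\varphi_\bullet\colon Z_*(C_\bullet)\to C'_\bullet$ with basis $F^{-1}\mathfrak{Z}$ then finishes the argument: injectivity of $\varphi$ makes the kernel intersection empty and ensures that $\mathfrak{Z}=\varphi(F^{-1}\mathfrak{Z})$ is linearly independent, so $\mathfrak{Z}$ is a filtration compatible basis for $\varphi_\bullet(Z_*(C_\bullet))$ with $\supp_{\varphi_\bullet(Z_*(C_\bullet))}(z)=\supp_{Z_*(C_\bullet)}(F^{-1}z)=\supp_{C_\bullet}(F^{-1}z)$. The main obstacle is the first step: one has to recognize that swapping $v_j$ for the column of $R^\varphi$ with pivot $j$, for each $j\in\pivs R^\varphi$, yields a filtration compatible basis that contains the desired boundaries; once this exchange is identified, the remainder consists of two routine invocations of \cref{lem:quotient_basis}.
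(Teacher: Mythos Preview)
Your proposal is correct and follows essentially the same approach as the paper: the set $\widetilde{\mathfrak{C}}$ is exactly the paper's $\mathfrak{X}$, and both arguments proceed via \cref{lem:filtration_basis_exchange} followed by \cref{lem:quotient_basis} applied to $\partial_\bullet$. The only cosmetic difference is that the paper handles the final push-forward through $\varphi_\bullet$ by directly invoking that a mono restricts to an isomorphism onto its image, whereas you phrase this as a second application of \cref{lem:quotient_basis}; both are valid and amount to the same thing.
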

\begin{proof}
We start by showing that 
\[
\mathfrak{X}=\cols R^{\varphi}\cup \left\{v_{j}\mid j\notin\pivs R^{\varphi}\right\}.
\]
is a filtration compatible basis for $C_{\bullet}$.
The same argument as in the beginning of the proof of \cref{lem:boundary_basis} yields that $\cols V$ is a filtration compatible basis for $C_{\bullet}$.
Next, note that $\mathfrak{X}$ is linearly independent since all elements have unique pivots: $R^{\varphi}$ is reduced and we only consider those $v_{j}$ with $\piv v_{j}=j\notin\pivs R^{\varphi}$. Moreover, we have a bijection $\mathfrak{X}\to\cols V$ given by mapping $v_{j}$ to itself and mapping $r^{\varphi}_{j}$ to $v_{\piv r^{\varphi}_{j}}$. Recall that $\piv r^{\varphi}_{j}=\piv v_{\piv r^{\varphi}_{j}}$ implies 
\[
\supp_{C_{\bullet}}\left(r^{\varphi}_{j}\right)=\supp_{C_{\bullet}}\left(v_{\piv r^{\varphi}_{j}}\right).
\]
Since $\cols V$ is a filtration compatible basis for $C_{\bullet}$, \cref{lem:filtration_basis_exchange} now implies that $\mathfrak{X}$ is also a filtration compatible basis for $C_{\bullet}$.

Next, we can apply \cref{lem:quotient_basis} to the boundary operator $\partial_{\bullet} \colon C_{\bullet} \to C_{\bullet}$ and the filtration compatible basis $\mathfrak{X}$ since $(\partial(v))_{v\in\mathfrak{X}\setminus\ker\partial} \subseteq \cols R$ is linearly independent by reducedness of $R$. 
We obtain that $\mathfrak{X} \cap \ker\partial = F^{-1}\mathfrak{Z}$ is a filtration compatible basis for $\ker\partial_{\bullet} = Z_{*}(C_{\bullet})$ with $\supp_{Z_{*}(C_{\bullet})}(x) = \supp_{C_{\bullet}}(x)$ for all $x \in F^{-1}\mathfrak{Z}$. 
The claim now follows from the fact that $\varphi_{\bullet}$ is mono, so that its restriction is an isomorphism $Z_{*}(C_{\bullet})\to \varphi_{\bullet}(Z_{*}(C_{\bullet}))$ represented by $F$.
\end{proof}

Having extended the filtration compatible basis for $B_{*}(C'_{\bullet})$ to one for $\varphi_{\bullet}(Z_{*}(C_{\bullet}))$, we also obtain one for $\varphi_{\bullet}(Z_{*}(C_{\bullet}))\cap B_{*}(C_{\bullet})$.

\begin{lem}\label{lem:cap_basis}
The set $\mathfrak{B}=\cols FR^{\varphi}$ is a filtration compatible basis for $\varphi_{\bullet}(Z_{*}(C_{\bullet}))\cap B_{*}(C'_{\bullet})$,
and for all $j$ with $r^{\varphi}_{j}\neq 0$ we have 
\begin{align*}
\supp_{\varphi_{\bullet}(Z_{*}(C_{\bullet}))\cap B_{*}(C'_{\bullet})}(Fr^{\varphi}_{j})
&=\supp_{C_{\bullet}}(r^{\varphi}_{j})\cap\supp_{C'_{\bullet}}(v^{\varphi}_{j}) .
\end{align*}
\end{lem}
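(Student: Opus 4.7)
The plan is to derive this lemma as a direct consequence of the intersection lemma (\cref{lem:intersection}) applied to the two filtration compatible bases produced in \cref{lem:boundary_basis} and \cref{lem:z_basis}. These earlier lemmas give us filtration compatible bases $\cols FR^{\varphi}$ for $B_{*}(C'_{\bullet})$ and $\mathfrak{Z} = \cols FR^{\varphi} \cup \{Fv_{j} \mid r_{j}=0,\ j\notin\pivs R^{\varphi}\}$ for $\varphi_{\bullet}(Z_{*}(C_{\bullet}))$, and crucially the first basis is contained in the second.

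First, I would verify the hypotheses of \cref{lem:intersection} for the filtrations $\varphi_{\bullet}(Z_{*}(C_{\bullet}))$ and $B_{*}(C'_{\bullet})$, both viewed as subfiltrations of $C'_{\bullet}$. Since $\cols FR^{\varphi} \subseteq \mathfrak{Z}$, the union of the two bases equals $\mathfrak{Z}$, which is linearly independent by \cref{lem:z_basis}. \Cref{lem:intersection} then yields that the intersection of the bases, which equals $\cols FR^{\varphi}$, is a filtration compatible basis for $\varphi_{\bullet}(Z_{*}(C_{\bullet})) \cap B_{*}(C'_{\bullet})$.

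For the support formula, \cref{lem:intersection} gives
\[
\supp_{\varphi_{\bullet}(Z_{*}(C_{\bullet}))\cap B_{*}(C'_{\bullet})}(Fr^{\varphi}_{j}) = \supp_{\varphi_{\bullet}(Z_{*}(C_{\bullet}))}(Fr^{\varphi}_{j}) \cap \supp_{B_{*}(C'_{\bullet})}(Fr^{\varphi}_{j}).
\]
The first factor on the right equals $\supp_{C_{\bullet}}(F^{-1}Fr^{\varphi}_{j}) = \supp_{C_{\bullet}}(r^{\varphi}_{j})$ by the support formula in \cref{lem:z_basis}, and the second factor equals $\supp_{C'_{\bullet}}(v^{\varphi}_{j})$ by the support formula in \cref{lem:boundary_basis}. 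Combining these gives the claim.

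There is essentially no obstacle here beyond carefully checking that the two bases of the subfiltrations sit inside a common ambient filtration (which they do, since both $\varphi_{\bullet}(Z_{*}(C_{\bullet}))$ and $B_{*}(C'_{\bullet})$ are contained in $C'_{\bullet}$) and that $\cols FR^{\varphi}$ literally coincides with the set-theoretic intersection $\mathfrak{Z} \cap \cols FR^{\varphi}$; both are consequences of the explicit descriptions already established. The main work has been done in the preceding two lemmas, and the present statement is the natural payoff that prepares for applying \cref{prop:quotient_barcode} to the quotient $\varphi_{\bullet}(Z_{*}(C_{\bullet}))/(\varphi_{\bullet}(Z_{*}(C_{\bullet})) \cap B_{*}(C'_{\bullet}))$ in the proof of \cref{cor:image_barcode}.
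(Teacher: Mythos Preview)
Your proposal is correct and matches the paper's own proof essentially verbatim: apply \cref{lem:intersection} to the bases from \cref{lem:boundary_basis} and \cref{lem:z_basis}, using that their union equals $\mathfrak{Z}$ (hence is linearly independent) and that their intersection is $\cols FR^{\varphi}$, then combine the support formulas from those two lemmas.
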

\begin{proof}
Recall that $\cols FR^{\varphi}$ is a filtration compatible basis for $B_{*}(C'_{\bullet})$ and $\mathfrak{Z}$ is one for $\varphi_{\bullet}(Z_{*}(C_{\bullet}))$. We have $\cols FR^{\varphi}\cup\mathfrak{Z}=\mathfrak{Z} \subseteq C'$, which is linearly independent.
Thus, the claim follows from \cref{lem:intersection} together with the support equalities from \cref{lem:boundary_basis,lem:z_basis}.
\end{proof}

We are now prepared to prove the main result of this section.
\begin{proof}[Proof of \cref{cor:image_barcode}]
By definition of the induced map in homology, we have 
\[
\im H_{*}(\varphi_{\bullet})\cong \frac{\varphi_{\bullet}(Z_{*}(C_{\bullet}))}{\varphi_{\bullet}(Z_{*}(C_{\bullet}))\cap B_{*}(C'_{\bullet})}.
\]
Thus, the claim follows by applying \cref{prop:quotient_barcode} to the inclusion $\varphi_{\bullet}(Z_{*}(C_{\bullet}))\cap B_{*}(C'_{\bullet}) \subseteq B_{*}(C'_{\bullet})$ with the filtration compatible bases $\mathfrak{B}\subseteq \mathfrak{Z}$ with supports as previously determined in \cref{lem:cap_basis,lem:z_basis}, combined with the additional observation that the interval difference
$\supp_{C_{\bullet}} (r^{\varphi}_j) \setminus (\supp_{C_{\bullet}}(r^{\varphi}_{j})\cap\supp_{C'_{\bullet}}(v^{\varphi}_{j}))$ is non-empty only if it equals 
$\supp_{C_{\bullet}} (r^{\varphi}_j) \setminus \supp_{C'_{\bullet}} (v^{\varphi}_j)$.
\end{proof}

\subsection{Clearing}\label{subsec:clearing}
Before discussing clearing in the image setting, we first recall the idea taken from \cite{Chen.2011} for computing the persistent homology of a filtration of chain complexes $C_{\bullet}$ by reducing the boundary matrix $D$ to $R = D \cdot V$. We keep the notation from the previous section, and we assume that our filtration compatible basis $\mathfrak{C}$ is compatible with the grading in the sense that the restriction of this basis to each grading summand is again a basis of that summand. 
Our discussion focuses on chain complexes, but of course the findings naturally apply to \emph{co}chain complexes with the appropriate adjustments to the grading.

In general, a non-zero column $r_j$ implies that $r_{i}=0$ for $i = \piv r_j$. Moreover, the homological degree of the $i$-th element of $\mathfrak{C}$ is one less than that of the $j$-th element. This leads to the clearing procedure: Instead of simply reducing $D$ by column operations from left to right, we reduce columns in decreasing order of their homological degree (increasing in the case of cohomology). Before using column operations to reduce the columns in dimension $d$, we set $r_j=0$ for all indices $j$ which appear as pivots of the already reduced columns in dimension $d+1$.

Returning to the image setting, we also assume that the basis $\mathfrak{C}'$ and the map $\varphi_{\bullet}\colon C_{\bullet}\to C'_{\bullet}$ are compatible with the grading. Here, there is no direct analogue to the procedure outlined above: The mixed basis boundary matrix $D^{\varphi}$ fails to admit the property described above, i.e., it may happen that $r^{\varphi}_j\neq 0$ but $r^{\varphi}_{i}\neq 0$ for $i=\piv r^{\varphi}_j$. In order to obtain a useful condition for columns of $R^{\varphi}$ to be zero, we need to additionally consider the boundary matrix $D'=FD^{\varphi}$ and assume we have a reduction $R'=D'V'$.

\begin{prop}\label{prop:im_clearing}
Let $R'=D'V'$ and $R^{\varphi}=D^{\varphi}V^{\varphi}$ be reduced. 
For all indices $j$ we have $r^{\varphi}_j=0$ if and only if $r'_j=0$.
\end{prop}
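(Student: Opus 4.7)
The plan is to reduce both "if and only if" directions to a statement about linear dependencies among the columns of $D^{\varphi}$ and $D'$, exploiting that $D' = F D^{\varphi}$ with $F$ invertible.

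First I would prove a general lemma (or simply record it as an observation): for any matrix $X$ and any reduction $R = XV$ with $V$ upper-triangular and full-rank, one has $r_j = 0$ if and only if $x_j$ lies in the span of $x_1,\dots,x_{j-1}$. This is the standard fact underlying matrix reduction and follows from writing $r_j = V_{jj} x_j + \sum_{i<j} V_{ij}x_i$ with $V_{jj}\neq 0$, so $r_j = 0$ forces $x_j \in \operatorname{span}(x_1,\dots,x_{j-1})$, and conversely any such dependency can be used to construct column operations producing a zero column at position $j$, which will remain zero in any reduction because pivots of non-zero columns are unique up to column operations of this form (alternatively, reducedness implies $\dim\operatorname{span}(r_1,\dots,r_j) = \dim\operatorname{span}(x_1,\dots,x_j)$, so $r_j = 0$ iff this dimension does not increase at step $j$).

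Applying this observation to $R^{\varphi} = D^{\varphi} V^{\varphi}$ and to $R' = D' V'$, I obtain
\[
r^{\varphi}_j = 0 \iff d^{\varphi}_j \in \operatorname{span}(d^{\varphi}_1,\dots,d^{\varphi}_{j-1}), \qquad r'_j = 0 \iff d'_j \in \operatorname{span}(d'_1,\dots,d'_{j-1}).
\]
Since $D' = FD^{\varphi}$ we have $d'_k = F d^{\varphi}_k$ for every $k$, and because $F$ is invertible it preserves linear dependencies among any collection of columns. Hence the two right-hand conditions are equivalent, and the proposition follows.

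The only slightly delicate point is the lemma in the first step, since the paper does not seem to state it explicitly; I would either cite the standard reference or include a short self-contained argument based on the fact that $V^{\varphi}$ is upper-triangular with non-zero diagonal, which is what makes the claim about spans of initial segments of columns legitimate. Everything else is a direct consequence of $F$ being invertible and of the factorization $D^{\varphi} = F^{-1} D'$ given in the setup.
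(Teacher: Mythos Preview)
Your proof is correct, and the lemma you state is standard and easily verified: since $V$ is upper-triangular with nonzero diagonal, $\operatorname{span}(r_1,\dots,r_j)=\operatorname{span}(x_1,\dots,x_j)$ for every $j$, so the dimension jumps at step $j$ precisely when $x_j\notin\operatorname{span}(x_1,\dots,x_{j-1})$; and because $R$ is reduced, its nonzero columns have distinct pivots and are therefore linearly independent, so the dimension jumps at step $j$ precisely when $r_j\neq 0$. From there, invertibility of $F$ and $D'=FD^{\varphi}$ finish the argument exactly as you say.

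The paper's proof takes a different route. Rather than characterising zero columns via linear dependencies in $D^{\varphi}$ and $D'$, it first observes that $FR^{\varphi}=R'(V')^{-1}V^{\varphi}$ have the same column space, hence the same number of zero columns (both being reduced up to left multiplication by $F$). It then proves only one implication directly: if $r^{\varphi}_j=0$, the $j$th column of $R'(V')^{-1}V^{\varphi}$ vanishes, and since $(V')^{-1}V^{\varphi}$ is full-rank upper-triangular this is a linear combination of $r'_1,\dots,r'_j$ with nonzero coefficient on $r'_j$, forcing $r'_j=0$ by linear independence of the nonzero columns of $R'$. The counting observation then yields the converse for free. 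Your approach is more symmetric and arguably cleaner, isolating a reusable lemma about reductions; the paper's approach is slightly more ad hoc but avoids stating that lemma explicitly, trading it for a rank-counting trick.
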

\begin{proof}
First, note that $r^{\varphi}_j=0$ if and only if $Fr^{\varphi}_j=0$. Moreover, $FR^{\varphi}$ and $R'$ have the same column space, since $FR^{\varphi}=R'(V')^{-1}V^{\varphi}$. Thus, the number of zero columns of $R^{\varphi}$ is the same as the number of zero columns of $R'$ since their ranks are equal and their non-zero columns are linearly independent. Now, it suffices to show that $r^{\varphi}_j=0$ implies $r'_j=0$, so assume $r^{\varphi}_j=0$. Then $Fr^{\varphi}_j=0$, but $Fr^{\varphi}_j$ is also the same as the $j$-th column of $R'(V')^{-1}V^{\varphi}$. This is a linear combination of columns of $R'$ with non-zero coefficient for $r'_j$ since $(V')^{-1}V^{\varphi}$ is full-rank and upper-triangular. Non-zero columns of $R'$ are linearly independent, so this linear combination can only be zero if $r'_j=0$.
\end{proof}

In order to apply clearing to the reduction of $D^{\varphi}$, one can now reduce $D'$ with clearing as usual, and clear the columns with the same indices in $D^{\varphi}$.
Even more than that, one can also clear all other columns in $D^{\varphi}$ that correspond to 0 columns in the reduction of $D'$, meaning also those that have been reduced to 0 via column operations on $D'$.
With this optimization, there are no more columns that have to be reduced to 0 in the reduction of $D^{\varphi}$ even before any column operations on $D^{\varphi}$ have been performed. 
The only reduction steps that are left to be performed are those that make pivots unique among the non-zero columns.

\begin{cor}\label{cor:im_clearing}
If $D'$ has already been reduced to $R'$, one can set $r^{\varphi}_{j} = 0$ for all $j$ with $r'_{j} = 0$ before reducing $D^{\varphi}$, and no further columns of $R^{\varphi}$ will be reduced to $0$.
\end{cor}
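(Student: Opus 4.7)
The plan is to deduce this corollary almost entirely from \cref{prop:im_clearing} together with the structural observation that the location of zero columns in a reduced matrix depends only on $D^{\varphi}$, not on the specific reduction performed. The proposition tells us that $r^{\varphi}_j = 0$ if and only if $r'_j = 0$, and this equivalence holds for \emph{any} reduction $R^{\varphi} = D^{\varphi}V^{\varphi}$ with $V^{\varphi}$ full-rank and upper-triangular. In particular, the set $Z = \{j : r'_j = 0\}$ coincides with the set of indices of zero columns in every such reduced matrix $R^{\varphi}$.

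First I would justify that the initial clearing step is legitimate. If $j \in Z$, then in any valid reduction the $j$-th column of $R^{\varphi}$ is the zero vector; equivalently, the $j$-th column of $D^{\varphi}$ lies in the span of the strictly earlier columns. So setting $r^{\varphi}_j \coloneqq 0$ from the start is consistent with obtaining a valid reduction, it just skips the explicit column operations (and the corresponding entries of $V^{\varphi}$) that would have effected this cancellation.

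Next I would argue that, after this initial clearing, the remaining work consists only of column operations needed to make the pivots of the non-zero columns unique. Suppose for contradiction that during this cleanup some column $r^{\varphi}_i$ with $i \notin Z$ gets reduced to zero. Then the resulting matrix is a valid reduced form of $D^{\varphi}$ whose set of zero-column indices strictly contains $Z$, contradicting \cref{prop:im_clearing}. Therefore every column not initially cleared remains non-zero throughout the cleanup.

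The only subtle step is the legitimacy argument in the first paragraph, which I expect to be the main obstacle in a completely rigorous write-up: one should check that skipping the cancellation in column $j$ and simply setting it to zero still produces a matrix of the form $D^{\varphi}V^{\varphi}$ with $V^{\varphi}$ full-rank upper-triangular. This is essentially bookkeeping — one extends the identity on cleared columns by an appropriate upper-triangular block on the remaining columns — and does not require any new ideas beyond those already present in \cref{prop:im_clearing}. Given its brevity, the authors may even omit a formal proof and simply cite the proposition.
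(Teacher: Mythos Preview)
Your proposal is correct and matches the paper's approach: the paper states \cref{cor:im_clearing} without a separate proof, deriving it directly from \cref{prop:im_clearing} via the informal discussion preceding the corollary, exactly as you anticipated. Your elaboration of the legitimacy of the clearing step and the contradiction argument for ``no further zero columns'' fills in the details the authors leave implicit.
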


\subsection{Assembling Barcodes from (Co)homology Computations}\label{sec:summary}
Recalling our concrete setting of simplicial complexes and their persistent homology, assume that we are given filtrations of two isomorphic simplicial complexes
\[
K_{\bullet}:\emptyset=K_{0}\subseteq K_{1}\subseteq\dots\subseteq K_{N}=K
\qquad
\text{and}
\qquad
L_{\bullet}:\emptyset=L_{0}\subseteq L_{1}\subseteq\dots\subseteq L_{N}=L
\]
and a monomorphism $f_{\bullet}\colon L_{\bullet}\to K_{\bullet}$ inducing an isomorphism $f \colon L \to K$.
Following the notation from \cref{ex:im_simp_filt} and applying the previous results with $\varphi_{\bullet} = C_{*}(f_{\bullet})$, we see that the barcode of $\im H_{*}(f_{\bullet})$ can be determined via reductions of $D^{L}$ and $D^{f}$ and that the reduction of $D^{f}$ may be performed with clearing if $D^{K}$ has previously been reduced.

As is also known from the single filtration case, clearing needs to be initialized by performing a full persistence computation in the first homological degree for which persistence is computed.
Because persistence computations are often only feasible in low dimensions and practitioners are often only interested in barcodes in low degrees, it is much more powerful to apply clearing for \emph{co}homological grading, allowing for the initialization to be performed in degree 0.
Thus, our goal is to perform cohomological computations and still recover the image $\im H_{*}(f_{\bullet})$ in homology.

As a first step towards that goal, we recall that $\im H^{*}(f_{\bullet})$ and $\im H_{*}(f_{\bullet})$ have the same barcodes \cite{part1}.
However, the persistent cochain complex giving rise to persistent cohomology is not filtered, so the basic matrix reduction algorithm does not directly apply there.
Thus, we instead perform computations in the relative cohomology setting given by the map $H^{*}(f, f_{\bullet})$.
Its image no longer has the same barcode as $\im H_{*}(f_{\bullet})$, but there are some correspondence results \cite[Section 6.2]{part1}, which we will summarize next.
To state the result, for a barcode $B$ we write $B^{\dagger}$ for the intervals in $B$ that do not extend to the endpoints of our index set $T$ and $B^{\infty}$ for those intervals that do.

\begin{prop}[Bauer, Schmahl~\cite{part1}]
\label{prop:abs_rel}
We have
\[
B(\im H_{d-1}(f_{\bullet}))^{\dagger} = B(\im H^{d}(f, f_{\bullet}))^{\dagger}
\]
for all degrees $d$, and the map $I \mapsto T\setminus I$ defines bijections
\begin{align*}
B(\im H_{*}(f_{\bullet}))^{\infty} &\cong B(H^{*}(L, L_{\bullet}))^{\infty}, \\
B(\im H^{*}(f,f_{\bullet}))^{\infty} &\cong B(H_{*}(K_{\bullet}))^{\infty}.
\end{align*}
\end{prop}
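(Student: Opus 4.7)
The plan is to deduce both parts from the image-persistence analogues of the well-known absolute-relative-cohomology dualities established for single filtrations in \cite{MR2854319}. Since the statement is attributed to \cite{part1}, my task is to describe the conceptual route rather than to fully re-derive the persistence-theoretic bookkeeping.

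For the finite-bar equality, I would proceed in two steps. First, use the universal coefficient theorem pointwise to identify the persistent relative cohomology $H^*(f, f_\bullet)$ with the dual of persistent relative homology $H_*(f, f_\bullet)$; over a field, dualization of persistence modules preserves finite bars and only flips infinite ones between the endpoints of $T$. Second, apply the morphism of long exact sequences of the pairs $(K, K_\bullet)$ and $(L, L_\bullet)$ induced by $f_\bullet$, together with the fact that $f$ is an isomorphism on the final step. A diagram chase on the image subquotients in these exact sequences identifies the finite bars of $\im H_{d-1}(f_\bullet)$ with those of $\im H_d(f, f_\bullet)$, the degree shift arising from the connecting homomorphism.

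For the infinite-bar bijections, the key observation is that an infinite bar in $B(\im H_*(f_\bullet))^\infty$ is an interval of the form $[b, N]$ encoding a class of $\im H_*(f) = H_*(K) \cong H_*(L)$ together with its birth time in the image filtration. The complement $T \setminus [b, N] = [0, b-1]$ records the times at which this class has not yet appeared, and this is exactly the information captured by an infinite bar $[0, b-1]$ in $B(H^*(L, L_\bullet))^\infty$ via the single-filtration duality for $L_\bullet$. The second bijection is the symmetric statement, relating the image of $f_\bullet$ in relative cohomology to the absolute homology of $K_\bullet$ by the analogous complementation argument applied to $K_\bullet$.

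The main obstacle is making the five-lemma and complementation arguments precise at the level of persistence modules, where interactions between subquotients, barcodes, and interval endpoints need careful bookkeeping; the finite/infinite dichotomy between $B^\dagger$ and $B^\infty$ is exactly where the subtlety lives, since the connecting homomorphism of the pair sequence can create and destroy intervals at the endpoints of $T$ in ways that must be tracked compatibly with the isomorphism $f \colon L \to K$. This bookkeeping is carried out in detail in \cite[Section~6.2]{part1}, so in the present paper I would simply invoke those results rather than reproduce them here.
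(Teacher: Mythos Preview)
The paper does not give its own proof of this proposition: it is stated with attribution to \cite{part1} and is used as an imported result, with the surrounding text explicitly pointing to \cite[Section~6.2]{part1} for the correspondence results being summarized. Your proposal correctly recognizes this and ultimately defers to that reference, which is exactly what the paper does; the conceptual outline you give (universal coefficients for the homology/cohomology passage, the morphism of long exact sequences of the pairs for the degree shift on finite bars, and complementation for the infinite bars via the single-filtration dualities of \cite{MR2854319}) is a reasonable sketch of the ideas underlying the cited result, even if some of the matching of infinite bars to the domain versus codomain filtration would still need the induced-matching machinery to pin down precisely.
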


Note that none of the intervals in the barcodes we consider span the whole index set $T$ because we assume that our filtrations start with $L_{0} = K_{0} = \emptyset$.

\cref{prop:abs_rel} implies that in order to determine the barcode of $\im H_{*}(f_{\bullet})$, it suffices to compute $B(H^{*}(L, L_{\bullet}))^{\infty}$ and $B(\im H^{*}(f, f_{\bullet}))^{\dagger}$.
Following \cref{ex:simp_filt,thm:single_filt}, we observe that $B(H^{*}(L, L_{\bullet}))^{\infty}$ may be determined from a reduction of the coboundary matrix $(D^{L})^{\perp}$, and following \cref{ex:im_simp_filt,cor:image_barcode}, we know that $B(\im H^{*}(f, f_{\bullet}))^{\dagger}$ may be determined from a reduction of the coboundary matrix $(D^{f})^{\perp}$.
In the relative cohomology setting, the matrices $(D^{L})^{\perp}$ and $(D^{f})^{\perp}$ play the roles of $D'$ and $D^{\im}$ in the general setting, so by \cref{cor:im_clearing} we can simultaneously reduce these matrices with clearing. 

We summarize the discussion in the following theorem.
To simplify notation, we will assume that we are given funtions $k$ and $l$ on $K \cong L$ that induce the filtrations $K_{\bullet}$ and $L_{\bullet}$, respectively, via their sublevel set filtrations. 
For example, the functions $l$ and $k$ would be given by the diameter functions if $K_{\bullet}$ and $L_{\bullet}$ are Vietoris--Rips filtrations for different metrics on the same set of points.
Moreover, recall that if $A$ is a matrix, we write $a_{j}$ for its $j$th column.
To determine barcodes from reductions of boundary matrices, recall that the column and row indices of the matrices $D^{f}$, $(D^{f})^{\perp}$, etc., correspond to the simplices of $K \cong L$ in different orders. 
In particular, the pivot index of a column vector $c$ will in this context always corresponds to a unique simplex, which we also denote by $\pivsimp c$.
Combining \cref{cor:image_barcode,cor:im_clearing,prop:abs_rel}, we now get the following.

\begin{cor}\label{thm:main}
The matrices $(D^{L})^{\perp}$ and $(D^{f})^{\perp}$ can be reduced with clearing, and given reductions $S = (D^{f})^{\perp} W$ and $R = (D^{L})^{\perp} V$,
the barcode of $\im H_{*}(f_{\bullet})$ can be determined as the multiset
\[
\left\{ \left[ l ( \pivsimp w_{j} ), k ( \pivsimp s_{j} ) \right) \neq \emptyset \mid s_{j} \neq 0 \right\}
\cup
\left\{ \left[ l (\pivsimp v_i ), \infty \right) \mid r_i = 0~\text{and} ~  i\notin\pivs R\right\}.
\]
\end{cor}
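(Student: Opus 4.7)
The plan is to pass from absolute homology to relative cohomology, read off the barcode of $\im H^*(f, f_\bullet)$ via \cref{cor:image_barcode} and the barcode of $H^*(L, L_\bullet)$ via \cref{thm:single_filt}, and then transport back through the dualities in \cref{prop:abs_rel}. The clearing claim will follow immediately from \cref{cor:im_clearing}.

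By \cref{ex:simp_filt,ex:im_simp_filt}, $(D^L)^\perp$ is a filtration coboundary matrix for $C^*(L, L_\bullet)$ and $(D^f)^\perp$ is the mixed-basis coboundary matrix of the relative cohomology morphism $C^*(K, K_\bullet) \hookrightarrow C^*(L, L_\bullet)$. In the notation of \cref{subsec:clearing}, $(D^L)^\perp$ plays the role of $D'$ and $(D^f)^\perp$ that of $D^\varphi$. Since $(D^L)^\perp$ admits the standard cohomological clearing of \cite{Chen.2011,MR2854319}, initialised in degree~$0$, and since \cref{cor:im_clearing} then permits zeroing every column of $(D^f)^\perp$ whose index is a vanished column of $R$ before any reduction of $(D^f)^\perp$ begins, both matrices can indeed be reduced with clearing.

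To assemble the barcode, I would decompose $B(\im H_*(f_\bullet))$ via \cref{prop:abs_rel}: its finite part agrees (up to a degree shift) with $B(\im H^*(f, f_\bullet))^\dagger$, and its essential part is in bijection with $B(H^*(L, L_\bullet))^\infty$ under $I \mapsto T \setminus I$. The first summand of \cref{cor:image_barcode}, applied to the reduction $S = (D^f)^\perp W$, produces precisely the finite intervals of $\im H^*(f, f_\bullet)$ (the second summand is not needed here, since essential intervals of the image are supplied by the other duality and would otherwise require also reducing $(D^K)^\perp$). The second summand of \cref{thm:single_filt} applied to $R = (D^L)^\perp V$ produces the essential intervals of $H^*(L, L_\bullet)$.

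The main obstacle is the book-keeping induced by the antitranspose $(-)^\perp$: cohomological supports, read off from pivots as explained after \cref{cor:image_barcode}, must be translated under $I \mapsto T \setminus I$ into the half-open intervals in the statement. Since $W$ is full-rank upper triangular, $\piv w_j = j$, so $\pivsimp w_j$ is exactly the simplex at column index $j$ of $(D^f)^\perp$, with $L$-filtration value $l(\pivsimp w_j)$ giving the birth time of the class in the image; the death is the $K$-filtration value $k(\pivsimp s_j)$ of the pivot simplex of the reduced column. The non-emptiness condition $\supp_{C_\bullet}(r^\varphi_j) \setminus \supp_{C'_\bullet}(v^\varphi_j) \neq \emptyset$ from \cref{cor:image_barcode} then translates into $l(\pivsimp w_j) < k(\pivsimp s_j)$, yielding the stated half-open interval, while an analogous argument applied to \cref{thm:single_filt} on $(D^L)^\perp V$ (again using $\piv v_i = i$) yields the essential intervals $[l(\pivsimp v_i), \infty)$.
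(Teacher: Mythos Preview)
Your proposal is correct and follows essentially the same route as the paper: the paper's proof is simply ``Combining \cref{cor:image_barcode,cor:im_clearing,prop:abs_rel}'', and you combine precisely these ingredients in the same way, correctly identifying that $(D^L)^\perp$ and $(D^f)^\perp$ play the roles of $D'$ and $D^\varphi$ in the relative cohomology setting, that the finite intervals come from the first summand of \cref{cor:image_barcode} while the essential ones come from \cref{thm:single_filt} applied to $(D^L)^\perp$, and that this avoids reducing $(D^K)^\perp$. Your explicit treatment of the pivot/support book-keeping under the antitranspose goes beyond what the paper spells out but is consistent with it.
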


Recall that the column and row indices of the coboundary matrices indicated by $(-)^{\perp}$ correspond to the simplices of $K \cong L$ in reverse filtration order.
Hence, the pivot simplex of a column vector appearing in the theorem will be the \emph{first} simplex appearing in the filtration among those that correspond to a non-zero entry of the column, while for the usual boundary matrices $D^{L}$, $D^{K}$, etc., the pivot simplex of a column would be the one that appears \emph{last} in the filtration among those simplices that correspond to a non-zero entry.

\subsection{Apparent and emergent pairs in image matrix reduction}\label{sec:apparent}
An important optimization in persistence computation leading to significant computational improvements is given by utilizing the \emph{apparent pairs} in the filtration, which are pairs $(\sigma,\tau)$ in the filtration such that $\sigma$ is the latest facet of $\tau$ in the filtration and $\tau$ is the earliest cofacet of $\sigma$.
Apparent pairs always form persistence pairs, since the corresponding columns are reduced already in the (co)boundary matrix.
More generally, if $(\sigma,\tau)$ is a persistence pair and $\tau$ is the earliest cofacet of $\sigma$, we say that $(\sigma,\tau)$ is an \emph{emergent cofacet pair}.
In Ripser \cite{MR4298669}, the zero persistence emergent pairs are identified during the construction of the columns of the coboundary matrix, terminating this construction early once the pivot index is found in case it yields an emergent pair.

This strategy turns out to carry over to the image setting as well.
The criterion used in Ripser for identifying the pivot index early is that its corresponding simplex appears in the filtration simultaneously with the simplex corresponding to the column.
When reducing the mixed basis boundary matrix $D^{\varphi}$ for $\varphi_{\bullet} \colon C_{\bullet}\to C'_{\bullet}$, we apply the criterion with respect to the filtration $C'$, which determines the row orders and hence the pivot of a column.
Note that the pairs identified this way do not necessarily have persistence $0$ anymore.

\subsection{Computational Experiments}\label{sec:benchmarks}
We provide an implementation \cite{ripser_image} of the algorithm resulting from \cref{thm:main} including the clearing optimization, based on the \texttt{simple} branch of Ripser \cite{MR4298669}, for the special case where $L_{\bullet} = \Rips_{\bullet}(X,d)$ and $K_{\bullet} = \Rips_{\bullet}(X,d')$ are filtrations of Vietoris--Rips complexes corresponding to two metrics $d$ and $d'$ on a finite set $X$ that satisfy $d(x,y) \geq d'(x,y)$ for all $x,y \in X$, with the map between filtrations given by the inclusions of $L_{t}$ into $K_{t}$. Recall that the inequality $d \geq d'$ ensures that $L_{t}$ is in fact a subcomplex of $K_{t}$.

We performed two example computations on a desktop computer with a 3.8 GHz 8-Core Intel Core i7 processor and 128 GB 2667 MHz DDR4 memory.
The first example is given by $X$ being 192 points sampled uniformly at random from the unit sphere in $\mathbb{R}^{3}$, with the distance $d$ being given by the geodesic distance on the sphere and the distance $d'$ being given by the Euclidean distance in $\mathbb{R}^{3}$.
The second example consists of 256 points sampled uniformly at random from $\mathrm{SO}(3)$, with $d$ given by the geodesic distance on $\mathrm{SO}(3) \cong \mathbb R P^3$ scaled by a factor of $\sqrt{2}$ and $d'$ given by the Frobenius norm distance on $\mathbb{R}^{3 \times 3}$. Scaling the geodesic distance is necessary to ensure that $d \geq d'$ holds.
Running times and memory usage are summarized in \cref{table:experiments}.

\begin{table}
\caption{Running time and memory usage of Ripser-image and Ripser-simple for different data sets.
The filtrations are defined by two difference metrics on the point cloud (extrinsic and intrinsic).
Ripser-image computes the image barcode, while Ripser-simple only computes the barcode of a single filtration.
The number of points in the data set is specified by $n$, the maximum dimension up to which persistence was computed is specified by $d$.
}
\label{table:experiments}
\begin{tabular}{l c c r@{ } r r@{ } r}
Data Set&$n$&$d$&\multicolumn{2}{c}{Ripser-image}&\multicolumn{2}{c}{Ripser-simple}\\
\hline
$S_2$&192&3&54.7\,s,&3.3\,GB& 31.4\,s,&3.3\,GB\\
$\mathrm{SO}(3)$&256&3&223\,s,&12.9\,GB&126\,s,&12.9\,GB
\end{tabular}
\end{table}

\section*{Acknowledgements}
This research has been supported by the German Research Foundation (DFG) through the Collaborative Research Center SFB/TRR 109 \emph{Discretization in Geometry and Dynamics}, the Collaborative Research Center SFB/TRR 191 \emph{Symplectic Structures in Geometry, Algebra and Dynamics}, the Cluster of Excellence EXC-2181/1 \emph{STRUCTURES}, and the Research Training Group RTG 2229 \emph{Asymptotic Invariants and Limits of Groups and Spaces}.

\printbibliography

\end{document}